\documentclass[%
 aip,
 jmp,
 amsmath,amssymb,
 preprint,
]{revtex4-1}

\usepackage{graphicx}
\usepackage{dcolumn}
\usepackage{bm}

\usepackage[utf8]{inputenc}
\usepackage[T1]{fontenc}
\usepackage{mathptmx}
\usepackage{etoolbox}

\usepackage{amsfonts}
\usepackage{amsthm}
\newtheorem{theorem}{Theorem}

\newtheorem{corollary}[theorem]{Corollary}

\newtheorem{definition}[theorem]{Definition}

\newtheorem{lemma}[theorem]{Lemma}

\newtheorem{proposition}[theorem]{Proposition}
\newtheorem{remark}[theorem]{Remark}

\makeatletter
\def\@email#1#2{%
 \endgroup
 \patchcmd{\titleblock@produce}
  {\frontmatter@RRAPformat}
  {\frontmatter@RRAPformat{\produce@RRAP{*#1\href{mailto:#2}{#2}}}\frontmatter@RRAPformat}
  {}{}
}%
\makeatother

\begin{document}


\title{A Constructive Approach to $q$-Gaussian Distributions: $\alpha$-Divergence as Rate Function and Generalized de Moivre-Laplace Theorem\thanks{The material in this paper was presented in part at the 2014 International Symposium on Information Theory and Its Applications (ISITA2014) \cite{HS14}.}}

\author{Hiroki Suyari}
\email[H. Suyari, Corresponding Author: ]{suyari@faculty.chiba-u.jp, suyarilab@gmail.com}
\affiliation{Graduate School of Informatics, Chiba University, 1-33, Yayoi-cho, Inage-ku, Chiba 263-8522, Japan}

\author{Antonio M. Scarfone}
\email[A. M. Scarfone: ]{antonio.scarfone@polito.it}
\affiliation{Istituto dei Sistemi Complessi (ISC-CNR) c/o, Politecnico di Torino, Corso Duca degli Abruzzi 24, Torino I-10129, Italy}

\date{\today}

\begin{abstract}
The Large Deviation Principle (LDP) and the Central Limit Theorem (CLT) are central pillars of probability theory.
While their formulations are established under the i.i.d. assumption, the probabilistic foundation for power-law distributions has primarily evolved through descriptive models or variational principles, rather than a constructive derivation comparable to the classical binomial process.
This paper establishes a constructive probabilistic framework for power-law distributions, proceeding from the nonlinear differential equation $dy/dx = y^q$ without assuming a specific distribution a priori.
We build the algebraic and combinatorial foundations, which lead to a generalized binomial distribution based on finite counting.
We prove the LDP for this generalized binomial distribution in the regime $0 < q < 1$, demonstrating that the $\alpha$-divergence is identified as the rate function, and clarify the breakdown of this macroscopic scaling for heavier tails ($q > 1$).
This result connects our constructive framework to the structures of information geometry.
Furthermore, we prove a generalized de Moivre-Laplace theorem, showing that the generalized binomial distribution converges to a heavy-tailed limit distribution (the $q$-Gaussian distribution).
We derive that the scaling law follows the order of $n^{q/2}$ as a consequence of the underlying nonlinearity.
These analytical results are numerically verified for distinct values of $q \in (0, 2)$.
This framework provides a constructive basis that unifies the shift-invariant exponential family and the rescaling-invariant power-law family.
\end{abstract}

\keywords{Large Deviation Principle, $\alpha$-Divergence, Rate function, Central Limit Theorem, $q$-Gaussian distributions, Power-law distributions, Generalized binomial distribution, Constructive approach.}

\maketitle


\section{Introduction}

The Large Deviation Principle (LDP) provides insights into the asymptotic behavior of rare events in stochastic phenomena, ranging from probability theory and statistical mechanics to information theory \cite{Varadhan66, Ellis85, CoverThomas06, Touchette09}.
Classic LDP results rely on the assumption of independent and identically distributed (i.i.d.) random variables.
While this assumption simplifies the analysis, yielding exponential decay of probabilities, it is restrictive for systems exhibiting power-law behaviors.
To extend the LDP framework beyond the i.i.d. setting, we reconsider the algebraic structure governing the probability laws.
The exponential family, which characterizes i.i.d. statistics, follows the \textit{exponential law}: $\exp(x)\exp(a) = \exp(x+a)$.
This implies that multiplication corresponds to a \textit{shift} operation in the argument:
\begin{equation}
x \mapsto x+a.
\label{shift}
\end{equation}
Under this shift property, the scale unit of measurement is invariant.
In contrast, power-law distributions do not obey this shift property.
Instead, they follow a \textit{rescaling} law:
\begin{equation}
x \mapsto ax \quad (a>0).
\label{rescaling}
\end{equation}
Here, the scale unit itself transforms.
This paper unifies these two mathematical structures---shift and rescaling---within a single framework.
Motivated by this scale-theoretic perspective, we adopt the nonlinear differential equation as our starting point:
\begin{equation}
\frac{dy}{dx} = y^q.
\label{nlde}
\end{equation}
This equation interpolates between the shift operation ($q=1$, recovering the exponential function) and the rescaling operation ($q \neq 1$, generating power functions).

\subsection{Contributions: A Constructive Approach via $\alpha$-Divergence}
While descriptive models of generalized statistics are known, their probabilistic foundation requires further formalization.
Existing derivations typically rely on entropy maximization and variational principles \cite{Ts88, TLSM95, TMP98, PT99} or the generalized law of errors \cite{Suyari04-LawofError}.
While these methods identify the $q$-Gaussian distribution as a macroscopic state, they do not detail the microscopic mechanism of how such a distribution arises from elementary random processes.
This paper presents a constructive approach, deriving the $q$-Gaussian distribution and the generalized LDP from the equation (\ref{nlde}), without relying on variational principles or specific functional assumptions.
Our derivation proceeds from the algebraic structure of finite counting, to a generalized binomial distribution, and finally proving its convergence to the $q$-Gaussian distribution by establishing a generalized de Moivre-Laplace theorem.
This paper provides a derivation of the \textit{$\alpha$-divergence} as the rate function for the generalized binomial process.
The $\alpha$-divergence is a concept in information geometry, known to be the unique divergence belonging to both the $f$-divergence and Bregman divergence classes \cite{Amari1985, Csiszar91, Amari2009, Amari2016}.
We demonstrate that the $\alpha$-divergence is derived as a consequence of the combinatorics rooted in the nonlinear differential equation \eqref{nlde}.
This result distinguishes our generalized binomial distribution from alternative generalizations, establishing it as a probabilistic model compatible with information geometry.

Our logical derivation proceeds as follows:
\begin{enumerate}
    \item[(i)] Starting from the equation $dy/dx=y^q$, we define the $q$-logarithm and the algebraic structure based on the $q$-product.
    \item[(ii)] We derive the $q$-Stirling's formula and define the $q$-binomial distribution $b_q(k;n,r)$ via combinatorial counting.
    \item[(iii)] We prove the LDP for this distribution for $0 < q < 1$, identifying the rate function as the $\alpha$-divergence (where $q=\frac{1-\alpha}{2}$), and demonstrate the failure of this standard LDP scaling for $q > 1$.
    This connects our constructive approach to the structures of information geometry.
    \item[(iv)] Finally, we prove the Generalized de Moivre-Laplace Theorem, which serves as a fundamental realization of the $q$-Central Limit Theorem ($q$-CLT).
    We demonstrate that the $q$-binomial distribution converges to the $q$-Gaussian distribution with a fluctuation scaling of order $n^{q/2}$, as a consequence of the nonlinearity $q$.
    \item[(v)] We numerically verify these analytical results for distinct values of $q \in (0, 2)$, confirming the asymptotic convergence and the derived scaling law.
\end{enumerate}

\subsection{Related Works and Organization}
Several approaches have been proposed to extend the Central Limit Theorem to complex or non-additive systems.
Notably, Umarov et al. \cite{Umarov2008} proposed a $q$-generalized CLT utilizing the $q$-Fourier transform.
However, their transform-based method involves analytical complexities regarding the convergence of functional sequences and does not provide a microscopic generative mechanism for the stochastic process.
In contrast, the present study establishes a constructive derivation rooted in elementary probability theory, specifically by extending the de Moivre-Laplace theorem from finite combinatorial counting.
Furthermore, our formulation not only derives the $q$-Gaussian distribution but also identifies the \textit{$\alpha$-divergence} as the exact rate function governing the large deviation principle---a structural correspondence that is fundamentally inaccessible within the $q$-Fourier framework.
Our work also differs from numerical or phenomenological studies such as \cite{RuizTsallis2012}.
While \cite{RuizTsallis2012} numerically investigates the LDP for correlated systems assuming $q$-Gaussians a priori, our work is analytical and derives the distribution and the rate function from first principles.
Similarly, unlike \cite{NaudtsSuyari2015} which applies $q$-exponentials to \textit{i.i.d.} assumptions, we construct a consistent \textit{non-i.i.d.} model rooted in the fundamental algebra.
In recent years, the formalism of large deviation theory for complex systems and strongly correlated processes has been extensively investigated from various perspectives \cite{Tirnakli2021, Tirnakli2022}. 
While these studies provide significant insights into the entropic extensivity and numerical behavior of such systems, the present work focuses on establishing a rigorous mathematical foundation through a constructive approach to the $q$-Gaussian distribution.

A preliminary version of this work, which establishes the results up to the large deviation principle (Sections \ref{sec:preliminaries}--\ref{sec:alpha_divergence_LDP}), was presented at the 2014 International Symposium on Information Theory and Its Applications (ISITA 2014) \cite{HS14}.
The present paper extends this foundational framework to establish the proof of the $q$-generalized Central Limit Theorem and the derivation of the scaling law (Section \ref{sec:q-CLT}).
The remainder of this paper is organized as follows.
Section \ref{sec:preliminaries} reviews the mathematical preliminaries and algebraic foundations.
Section \ref{sec:generalized_binomial} introduces the generalized binomial distribution derived from the refined $q$-Stirling's formula.
Section \ref{sec:alpha_divergence_LDP} establishes the connection to the $\alpha$-divergence and presents the large deviation estimate.
Section \ref{sec:q-CLT} provides the proof of the generalized Central Limit Theorem.
Section \ref{sec:numerical_verification} presents the numerical verification of the analytical results.
Section \ref{sec:conclusion} concludes the paper.


\section{Mathematical Preliminaries}
\label{sec:preliminaries}

This section reviews the algebraic structures and combinatorial formulas derived from the fundamental nonlinear differential equation, which serve as the basis for the subsequent analysis.

\subsection{$q$-Logarithm Representation and Rescaling Invariance}

The nonlinear differential equation (\ref{nlde}) introduces a generalized algebraic structure \cite{Tsallis04-PhysicaD}.
We define the fundamental functions that linearize this dynamics.

\begin{definition}[$q$-logarithm, $q$-exponential]
For $q \in \mathbb{R}$, the $q$-logarithm $\ln _{q}x:\mathbb{R}^{+}\rightarrow \mathbb{R}$ and the $q$-exponential $\exp _{q}\left( x\right) :\mathbb{R}\rightarrow \mathbb{R}$ (defined for $1+\left( 1-q\right) x>0$) are defined as:
\begin{align}
\ln _{q}x &:= \frac{x^{1-q}-1}{1-q}, \label{lnq} \\
\exp _{q}\left( x\right) &:= \left[ 1+\left( 1-q\right) x\right] ^{\frac{1}{1-q}}.
\label{expq}
\end{align}
\end{definition}

Using (\ref{lnq}), Eq.~(\ref{nlde}) is transformed into the linear form $d(\ln_{q}y)/dx=1$, yielding the solution $\ln_{q}y = x + \ln_{q}C_{0}$, where $C_{0}>0$ is determined by the initial condition.
This solution is rewritten as $y/C_{0} = \exp_{q}(x/C_{0}^{1-q})$. The dynamics exhibit the following scaling properties.

\begin{proposition}[Rescaling Invariance]
The nonlinear differential equation (\ref{nlde}) is invariant under the rescaling:
\begin{equation}
\tilde{y} := \frac{y}{C_{0}},\quad \tilde{x} := \frac{x}{{C_{0}^{1-q}}}.
\label{rescaling2}
\end{equation}
\end{proposition}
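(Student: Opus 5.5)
The plan is a direct change-of-variables computation: show that if $y(x)$ satisfies (\ref{nlde}), then $\tilde{y}$, viewed as a function of $\tilde{x}$ via (\ref{rescaling2}), satisfies the same equation $d\tilde{y}/d\tilde{x} = \tilde{y}^{q}$. Fix $C_{0}>0$. Since $\tilde{x} = x/C_{0}^{1-q}$ is an invertible linear map, we can write $x = C_{0}^{1-q}\tilde{x}$ and $y = C_{0}\tilde{y}$, and then use the chain rule.

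The key steps, in order:
\begin{equation}
\frac{d\tilde{y}}{d\tilde{x}} = \frac{dx}{d\tilde{x}}\,\frac{d\tilde{y}}{dx} = C_{0}^{1-q}\cdot\frac{1}{C_{0}}\frac{dy}{dx} = C_{0}^{-q}\,y^{q} = \left(\frac{y}{C_{0}}\right)^{q} = \tilde{y}^{q}.
\end{equation}
So the equation keeps exactly the same form in the rescaled variables. Conversely, the same computation read backwards shows that every solution of the rescaled equation gives a solution of (\ref{nlde}). Hence the solution sets correspond bijectively under (\ref{rescaling2}).

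As a consistency check against the explicit solution, take $\ln_{q}y = x + \ln_{q}C_{0}$. By the identity $\ln_{q}(ab) = \ln_{q}a + a^{1-q}\ln_{q}b$, which follows directly from (\ref{lnq}), this is equivalent to $\ln_{q}\tilde{y} = x/C_{0}^{1-q} = \tilde{x}$, that is, $\tilde{y} = \exp_{q}(\tilde{x})$. This is the normalized solution with initial value $1$. It shows that the rescaling maps the general solution onto a single universal solution, in agreement with the text preceding the proposition.

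The main obstacle is not technical, since the argument is a one-line chain-rule computation. The points that need care are these:
\begin{itemize}
\item making sure $y>0$, so that $y^{q}$ and $\ln_{q}y$ are defined, and restricting to the domain where $1+(1-q)\tilde{x}>0$;
\item noting that for $q=1$ the map reduces to the pure shift-compatible scaling $\tilde{x}=x$, $\tilde{y}=y/C_{0}$, which recovers the exponential case.
\end{itemize}
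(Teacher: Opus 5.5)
Your proposal is correct. Its main argument takes a more direct route than the paper, while your consistency check coincides with the paper's own route.

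The paper gives no separate proof. It reads the proposition off the explicit solution obtained by linearizing with the $q$-logarithm, $\ln_q y = x + \ln_q C_0$, which it rewrites as $y/C_0 = \exp_q(x/C_0^{1-q})$. That is exactly your ``consistency check'' via $\ln_q(ab)=\ln_q a + a^{1-q}\ln_q b$.

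Your chain-rule computation instead verifies the symmetry on the equation itself. It needs neither the solution formula nor the domain restriction $1+(1-q)\tilde{x}>0$; that caveat matters only for the check. It also works for an arbitrary $C_0>0$ and sends every solution to a solution.

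The paper's route gives a sharper statement, which it uses next. When $C_0=y(0)$, the rescaling collapses the whole one-parameter family of solutions onto the single normalized solution $\tilde{y}=\exp_q(\tilde{x})$. Taking $C_0=\exp_q(c)$ then yields the subsequent Equivalence of Shift and Rescaling directly.
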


\begin{proposition}[Equivalence of Shift and Rescaling]
A shift $x\mapsto x+c$ in $y=\exp_{q}\left( x\right)$ is equivalent to a simultaneous rescaling of the axes:
\begin{equation}
y^{\prime } := \frac{y}{\exp_{q}\left( c\right) },\quad x^{\prime } := \frac{x}{\left( \exp_{q}\left( c\right) \right)^{1-q}},
\label{rescaling3}
\end{equation}
recovering the form $y^{\prime } = \exp_{q}\left( x^{\prime }\right)$.
\end{proposition}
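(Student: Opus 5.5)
The plan is to verify the claim directly from the closed form (\ref{expq}) of the $q$-exponential. The key input is the factorization identity for $\exp_q$ of a sum. Set $E:=\exp_{q}(c)$, assumed positive and well defined, i.e.\ $1+(1-q)c>0$. Then
\begin{equation*}
1+(1-q)(x+c) = E^{1-q} + (1-q)x = E^{1-q}\left[1+(1-q)\frac{x}{E^{1-q}}\right].
\end{equation*}
This uses only $E^{1-q}=1+(1-q)c$, which is immediate from (\ref{expq}).

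Raising both sides to the power $\frac{1}{1-q}$ gives
\begin{equation*}
\exp_{q}(x+c) = E\,\exp_{q}\!\left(\frac{x}{E^{1-q}}\right),
\end{equation*}
valid wherever both sides are defined. Here the positivity of $E^{1-q}$ lets us split the power of the product, and for $q<1$ or $q>1$ the domains match, since $1+(1-q)(x+c)>0$ is equivalent to $1+(1-q)x/E^{1-q}>0$. Now put $y=\exp_q(x+c)$, which is the shifted curve. Dividing by $E$ gives $y' = y/E = \exp_q(x')$ with $x' = x/E^{1-q}$, which is exactly (\ref{rescaling3}).

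Two remarks complete the argument. First, for $q=1$ we have $E^{1-q}=1$, so $x'=x$ and the statement reduces to $e^{x+c}=e^c e^x$. In this case the shift is a pure vertical rescaling, consistent with the exponential law discussed in the introduction. Second, the result can be cross-checked against the Rescaling Invariance proposition with $C_0=E$. The shifted function solves (\ref{nlde}) with initial value $y(0)=E$, and (\ref{rescaling2}) with $C_0=E$ is precisely (\ref{rescaling3}). The rescaled function therefore solves (\ref{nlde}) with initial value $1$, and by uniqueness of solutions it equals $\exp_q(x')$.

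The only delicate point is bookkeeping of domains and branches. For noninteger $\frac{1}{1-q}$ we need the bracket to be positive in order to write $(ab)^{1/(1-q)}=a^{1/(1-q)}b^{1/(1-q)}$. I would state the domain restriction explicitly, using the cutoff convention implicit in (\ref{expq}).
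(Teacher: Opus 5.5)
Your proof is correct and is essentially the paper's own argument: the paper obtains the proposition from the solution $\ln_q y = x + \ln_q C_0$ of (\ref{nlde}), rewritten as $y/C_0 = \exp_q(x/C_0^{1-q})$. With $C_0=\exp_q(c)$, so that $\ln_q C_0 = c$, this is exactly your factorization $1+(1-q)(x+c)=E^{1-q}\bigl[1+(1-q)x/E^{1-q}\bigr]$, and your domain bookkeeping and cross-check via Rescaling Invariance plus uniqueness are sound additions the paper leaves implicit.
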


The shift and rescaling invariance of the $q$-exponential representation induces scale variations in sequential observations when $q \neq 1$, leading to non-uniqueness in formulating probability distributions.
To avoid this ambiguity and maintain a fixed scale unit, we exclusively employ the $q$-logarithm representation throughout this paper \cite{MSW2019, SMS20}.

\subsection{Tsallis Entropy and $q$-Stirling's Formula}

We outline the combinatorial foundation. Building upon the $q$-algebra \cite{Nivanen2003, Borges2004}, we define the $q$-factorial and related combinatorial structures \cite{Suyari06}.

\begin{definition}[$q$-product \cite{Nivanen2003, Borges2004}]
For $x,y\in \mathbb{R}^{+}$ satisfying $x^{1-q}+y^{1-q}-1>0$, the $q$-product is defined as:
\begin{equation}
x\otimes _{q}y := \left[ x^{1-q}+y^{1-q}-1\right] ^{\frac{1}{1-q}}.
\end{equation}
\end{definition}

\begin{definition}[$q$-factorial \cite{Suyari06}]
Based on the $q$-product, the $q$-factorial for $n \in \mathbb{N}$ is defined as:
\begin{equation}
n!_{q} := 1\otimes _{q}2\otimes _{q}\cdots \otimes _{q}n.
\end{equation}
\end{definition}

In the $q$-logarithm representation, the $q$-factorial transforms into the exact sum $\ln_q n!_q = \sum_{k=1}^n \ln_q k$.
The asymptotic approximations of this sum, the formulation of the $q$-multinomial coefficient, and its exact relation to Tsallis entropy were established in \cite{Suyari06}.

\begin{proposition}[$q$-Stirling's Formula]
For $0 < q < 2$, the leading-order and refined approximations of $\ln_{q}n!_{q}$ are respectively given by:
\begin{align}
\ln _{q}n!_{q} &= \frac{n}{2-q}\ln _{q}n - \frac{n}{2-q} + O\left( \ln _{q}n\right), \label{prop:leading_q_stirling} \\
\ln _{q}n!_{q} &= \left( \frac{n}{2-q}+\frac{1}{2}\right) \ln _{q}n - \frac{n}{2-q} + c_{q} + O\left( n^{-q}\right), \label{prop:refined_q_stirling}
\end{align}
where $c_{q}$ is a constant dependent only on $q$.
\end{proposition}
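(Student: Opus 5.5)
We write $\ln_q n!_q=\sum_{k=1}^n \ln_q k=\frac{1}{1-q}\bigl(\sum_{k=1}^n k^{1-q}-n\bigr)$ (valid for $q\neq 1$; the case $q=1$ is the classical Stirling formula and is recovered as a limit). Everything then reduces to the asymptotics of the power sum $S_n:=\sum_{k=1}^n k^{s}$ with $s:=1-q\in(-1,1)$. We apply the Euler--Maclaurin summation formula to $f(x)=x^{s}$ on $[1,n]$, keeping the trapezoidal correction and one Bernoulli term:
\begin{equation*}
S_n=\int_1^n x^{s}\,dx+\frac{f(1)+f(n)}{2}+\frac{B_2}{2}\bigl(f'(n)-f'(1)\bigr)-\int_1^n \frac{B_2(\{x\})}{2}\,f''(x)\,dx .
\end{equation*}
Here $\int_1^n x^s dx=\frac{n^{2-q}-1}{2-q}$. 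Because $|f''(x)|\sim x^{s-2}=x^{-1-q}$ is integrable on $[1,\infty)$ for $q>0$, the remainder integral converges as $n\to\infty$. We split it as $\int_1^\infty-\int_n^\infty$; the tail is $O(n^{-q})$.

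Next we rewrite each term in the $\ln_q$ language, using $n^{1-q}=1+(1-q)\ln_q n$ and $n^{2-q}=n\,n^{1-q}=n+(1-q)\,n\ln_q n$. Dividing by $1-q$ and subtracting $n$ gives
\begin{equation*}
\frac{1}{1-q}\Bigl(\frac{n^{2-q}-1}{2-q}-n\Bigr)=\frac{n\ln_q n}{2-q}-\frac{n}{2-q}+\text{const}.
\end{equation*}
The trapezoidal term contributes $\frac{1}{1-q}\cdot\frac{n^{1-q}}{2}=\frac12\ln_q n+\text{const}$. The term $f'(n)=s\,n^{-q}$ is $O(n^{-q})$. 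All $n$-independent pieces, namely $\frac{-1}{2-q}$, $\frac12 f(1)$, $-\frac{B_2}{2}f'(1)$ and the convergent full integral $\int_1^\infty$, all divided by $1-q$, are collected into $c_q$. This yields the refined formula \eqref{prop:refined_q_stirling}.

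The leading-order formula \eqref{prop:leading_q_stirling} then follows immediately, because the extra terms $\frac12\ln_q n+c_q+O(n^{-q})$ are $O(\ln_q n)$. One caveat applies when $1<q<2$: there $\ln_q n\to \frac{1}{q-1}$ is bounded. In that regime the constant must be absorbed into the $O(\ln_q n)$ term, which is legitimate since $\ln_q n$ is bounded below by a positive constant for $n\ge2$. At $n=1$ the formula can be checked directly.

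The main obstacle is the uniform control of the remainder so that it is genuinely $O(n^{-q})$ rather than merely $o(1)$. The bound $|B_2(\{x\})|\le 1/6$ gives a tail of at most $C\int_n^\infty x^{-1-q}dx=O(n^{-q})$, which is sufficient. A second point needs care: the constant $c_q$ contains a factor $1/(1-q)$ and must be shown to stay finite as $q\to1$. This is the claim that the combination tends to the classical $\frac12\ln 2\pi$. We would handle it by observing that the bracketed constant vanishes at $q=1$ and applying l'Hôpital. Strictly, this check is only needed for continuity in $q$, not for the statement itself.
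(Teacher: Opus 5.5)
Your argument is correct and follows essentially the route the paper relies on. The paper gives no proof of this proposition (it defers to \cite{Suyari06} and later invokes an Euler--Maclaurin evaluation of $\sum_{k}\ln_q k$), and your reduction to the power sum $\sum_{k}k^{1-q}$, with one Bernoulli correction and the $O(n^{-q})$ tail bound, is that argument made explicit.

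Two minor remarks. First, the $n$-independent pieces combine exactly to $c_q=\frac{1}{2-q}-\frac{1}{12}+\frac{q}{2}\int_1^\infty B_2(\{x\})\,x^{-1-q}\,dx$, so finiteness at $q=1$ needs no l'H\^opital step. Second, drop the ``check at $n=1$'': there the error is $\frac{1}{2-q}$ while $\ln_q 1=0$, which is harmless only because the $O(\cdot)$ claims are asymptotic in $n$.
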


The refined form (\ref{prop:refined_q_stirling}) is essential for deriving the exact rate function in the subsequent sections.

\begin{definition}[$q$-multinomial coefficient]
The $q$-logarithm of the $q$-multinomial coefficient is defined as:
\begin{equation}
\ln _{q}\left( \begin{array}{ccc} & n & \\ n_{1} & \cdots & n_{k} \end{array} \right)_{q} := \ln _{q}n!_{q} - \sum_{i=1}^{k} \ln _{q}n_{i}!_{q},
\label{def:q-multinomial_log}
\end{equation}
where $n=\sum_{i=1}^{k}n_{i}$.
\end{definition}

\begin{proposition}[Combinatorial Determination of Tsallis Entropy]
For $0 < q < 2$, the asymptotic relation to Tsallis entropy $S_{q}^{\text{Tsallis}}(P) := \frac{1-\sum p_i^q}{q-1}$ is:
\begin{equation}
\ln _{q}\left( \begin{array}{ccc} & n & \\ n_{1} & \cdots & n_{k} \end{array} \right)_{q} = \frac{n^{2-q}}{2-q} S_{2-q}^{\text{Tsallis}}\left( \frac{n_{1}}{n},\cdots ,\frac{n_{k}}{n}\right) + O(\ln _{q}n).
\label{eq:one-to-one_Tsallis}
\end{equation}
\end{proposition}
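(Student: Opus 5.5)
Apply the leading-order $q$-Stirling formula (\ref{prop:leading_q_stirling}) to every factorial in the definition (\ref{def:q-multinomial_log}) and simplify. Write $p_i := n_i/n$, so $n_i = np_i$ and $\sum_i n_i = n$.

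\textbf{Step 1: substitute.} Using (\ref{prop:leading_q_stirling}) for $\ln_q n!_q$ and for each $\ln_q n_i!_q$, we get
\begin{equation*}
\ln_q n!_q - \sum_{i=1}^k \ln_q n_i!_q = \frac{1}{2-q}\Bigl( n\ln_q n - \sum_{i=1}^k n_i \ln_q n_i \Bigr) - \frac{n}{2-q} + \sum_{i=1}^k \frac{n_i}{2-q} + R_n .
\end{equation*}
Since $\sum_i n_i = n$, the linear terms $-\frac{n}{2-q}+\sum_i\frac{n_i}{2-q}$ cancel exactly.

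\textbf{Step 2: expand the $q$-logarithms.} By (\ref{lnq}),
\begin{equation*}
n_i\ln_q n_i = \frac{n_i^{2-q} - n_i}{1-q}.
\end{equation*}
The $-n_i/(1-q)$ parts also cancel against $n/(1-q)$ after summing. What remains is
\begin{equation*}
\frac{1}{(2-q)(1-q)}\Bigl(n^{2-q} - \sum_{i=1}^k n_i^{2-q}\Bigr) = \frac{n^{2-q}}{2-q}\cdot\frac{1-\sum_i p_i^{2-q}}{1-q}.
\end{equation*}
Setting $q' = 2-q$, we have $1-q = q'-1$, so the last factor is exactly $S_{2-q}^{\text{Tsallis}}(p_1,\dots,p_k)$. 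This gives the main term of (\ref{eq:one-to-one_Tsallis}) identically. For $q=1$ the same computation holds in the limit, giving Shannon entropy.

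\textbf{Step 3: control the error.} This is the main obstacle. The remainder is
\begin{equation*}
R_n = O(\ln_q n) + \sum_{i=1}^k O(\ln_q n_i).
\end{equation*}
Since $k$ is fixed and $\ln_q$ is increasing with $n_i \le n$, each $O(\ln_q n_i)$ term is bounded by $C\ln_q n$ whenever $n_i\ge 1$. Terms with $n_i = 0$ contribute nothing, since $0!_q = 1$ and $\ln_q 1 = 0$.

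The subtle point is uniformity: the implicit constant in (\ref{prop:leading_q_stirling}) must hold for all $n_i\ge1$, not merely asymptotically. I would secure this by noting that an $O(\cdot)$ bound valid for large arguments extends to all $m\ge1$ after enlarging the constant, because finitely many values are involved. When $1<q<2$, $\ln_q m$ stays bounded but is positive for $m\ge2$; at $m=1$ both sides of the Stirling estimate are explicit, so those terms can be absorbed directly.

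With uniformity in hand, $|R_n| \le (k+1)C\ln_q n$, which is $O(\ln_q n)$ as claimed.
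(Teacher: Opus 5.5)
Your proposal is correct and is essentially the argument the paper relies on. The paper gives no proof of this proposition but defers to Suyari (2006), where the relation is obtained the same way: insert the leading-order $q$-Stirling formula into $\ln_q n!_q-\sum_i\ln_q n_i!_q$, cancel the linear terms, and rewrite $\bigl(n^{2-q}-\sum_i n_i^{2-q}\bigr)/\bigl((2-q)(1-q)\bigr)$ as $\frac{n^{2-q}}{2-q}S_{2-q}^{\text{Tsallis}}$. Your Step 3 makes the remainder uniform over all compositions, including absorbing the explicit constant error at $n_i=1$ via $\ln_q n\ge\ln_q 2>0$; this tightens a point the cited derivation leaves implicit.
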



\section{The Generalized Binomial Distribution Derived from the Refined $q$-Stirling's Formula}
\label{sec:generalized_binomial}

This section introduces the generalized binomial distribution by applying the refined $q$-Stirling's formula to establish the exact asymptotic relationship between the $q$-binomial coefficient and Tsallis entropy.

\subsection{Relation between Refined $q$-Stirling's Formula and Tsallis Entropy}

By utilizing the refined $q$-Stirling's formula (\ref{prop:refined_q_stirling}), we establish an asymptotic relationship between the $q$-binomial coefficient and Tsallis entropy.
This relationship leads to the formal definition of the generalized binomial distribution.

\begin{proposition}
\label{prop:q-binomial_entropy}
For $0 < q < 2$, the $q$-logarithm of the $q$-binomial coefficient satisfies the following asymptotic expansion:
\begin{equation}
\ln _{q}\left( 
\begin{array}{l}
n \\ 
k%
\end{array}%
\right) _{q} = -c_{q} + \frac{1}{2}\left( \ln _{q}n - \ln _{q}k - \ln _{q}\left( n-k\right) \right) + \frac{n^{2-q}}{2-q}S_{2-q}^{\text{Tsallis}}\left( \frac{k}{n}, 1-\frac{k}{n}\right) + O(n^{-q}).
\label{q-log_q-bino-coef}
\end{equation}
\end{proposition}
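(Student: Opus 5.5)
Starting from the definition (\ref{def:q-multinomial_log}) with two blocks, $n_1=k$ and $n_2=n-k$, I write
\[
\ln_q\binom{n}{k}_q=\ln_q n!_q-\ln_q k!_q-\ln_q (n-k)!_q ,
\]
and substitute the refined $q$-Stirling formula (\ref{prop:refined_q_stirling}) into each of the three terms. I assume the regime in which $k$ and $n-k$ both grow proportionally to $n$, i.e.\ $k/n$ stays in a compact subset of $(0,1)$. Then the three error terms $O(n^{-q})$, $O(k^{-q})$ and $O((n-k)^{-q})$ combine into a single $O(n^{-q})$.

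\textbf{Constant and linear terms.} The constants give $c_q-2c_q=-c_q$. The linear terms give $-\frac{n}{2-q}+\frac{k}{2-q}+\frac{n-k}{2-q}=0$. The half-logarithm terms give exactly $\frac12(\ln_q n-\ln_q k-\ln_q(n-k))$. These account for the first two pieces of (\ref{q-log_q-bino-coef}).

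\textbf{Leading terms.} What remains is
\[
A:=\frac{1}{2-q}\bigl(n\ln_q n-k\ln_q k-(n-k)\ln_q(n-k)\bigr).
\]
I expand it with $\ln_q x=(x^{1-q}-1)/(1-q)$. The ``$-1$'' pieces cancel, because $n-k-(n-k)=0$. This leaves
\[
A=\frac{n^{2-q}-k^{2-q}-(n-k)^{2-q}}{(2-q)(1-q)} .
\]
Next I factor out $n^{2-q}$ and set $p=k/n$:
\[
A=\frac{n^{2-q}}{2-q}\cdot\frac{1-p^{2-q}-(1-p)^{2-q}}{1-q}.
\]
With $q'=2-q$ we have $q'-1=1-q$, so the second factor is exactly $S^{\text{Tsallis}}_{2-q}(p,1-p)=\frac{1-\sum p_i^{2-q}}{(2-q)-1}$. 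This identity is exact, so no error term arises from this step.

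\textbf{Main obstacle.} The difficulty is controlling the error terms uniformly, and the plan handles it by stating the regime explicitly. If $k$ or $n-k$ stays bounded, then $O(k^{-q})$ is not $O(n^{-q})$. In that case I will state that the expansion is understood for $k=\lfloor pn\rfloor$ with fixed $p\in(0,1)$, which is the regime used for the LDP and CLT later. For $q=1$ everything reduces to the classical Stirling computation, which can be treated as a limit or checked directly.
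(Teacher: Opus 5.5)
Your proof is correct and follows the same route as the paper, whose proof is simply the direct substitution of the refined $q$-Stirling formula (\ref{prop:refined_q_stirling}) into $\ln_q n!_q-\ln_q k!_q-\ln_q(n-k)!_q$. Your bookkeeping of that computation (the constants giving $-c_q$, the linear terms cancelling, and the exact identity $\frac{n^{2-q}-k^{2-q}-(n-k)^{2-q}}{(2-q)(1-q)}=\frac{n^{2-q}}{2-q}S_{2-q}^{\text{Tsallis}}(k/n,1-k/n)$) is right. You also make explicit that $k/n$ must stay in a compact subset of $(0,1)$ so that $O(k^{-q})+O((n-k)^{-q})$ collapses to $O(n^{-q})$; the paper leaves this implicit, but it is exactly the regime used later ($k=\lfloor nx\rfloor$ for the LDP and $k=nr+O(n^{q/2})$ for the CLT).
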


The proof follows from a direct computation using (\ref{prop:refined_q_stirling}).
Evaluating the standard case ($q=1$) in (\ref{q-log_q-bino-coef}) yields:
\begin{align}
\ln \left( 
\begin{array}{l}
n \\ 
k%
\end{array}%
\right) & = -\ln \sqrt{2\pi } + \frac{1}{2}\ln \frac{n}{k\left( n-k\right) } + nS_{1}\left( \frac{k}{n}, 1-\frac{k}{n}\right) + O(n^{-1})
\label{stand-log_stand-bino-coef-1} \\
& = \ln \left( \frac{1}{\sqrt{2\pi }}\sqrt{\frac{n}{k\left( n-k\right) }} \right) + \left( -k\ln \frac{k}{n} - \left( n-k\right) \ln \left( 1-\frac{k}{n}\right) \right) + O(n^{-1}).
\label{stand-log_stand-bino-coef}
\end{align}%
Exponentiating the leading terms provides the rigorous asymptotic expansion:
\begin{equation}
\left( 
\begin{array}{l}
n \\ 
k%
\end{array}%
\right) \left( \frac{k}{n}\right) ^{k}\left( 1-\frac{k}{n}\right) ^{n-k} = \frac{1}{\sqrt{2\pi }}\sqrt{\frac{n}{k\left( n-k\right) }} \left( 1 + O\left(\frac{1}{n}\right) \right).
\label{special form of the standard}
\end{equation}%
The left-hand side corresponds to the standard binomial distribution with the substitution $r=\frac{k}{n}$.
Here, the entropy term
\begin{equation}
nS_{1}\left( \frac{k}{n}, 1-\frac{k}{n}\right) = -k\ln \frac{k}{n} - \left( n-k\right) \ln \left( 1-\frac{k}{n}\right)
\label{last term q=1}
\end{equation}%
in (\ref{stand-log_stand-bino-coef-1}) exactly corresponds to the logarithmic probability term via the replacement $r=\frac{k}{n}$:
\begin{equation}
-\ln \left( r^{k}(1-r)^{n-k} \right) = -k\ln r - \left( n-k\right) \ln \left( 1-r\right).
\label{replacement r=k/n}
\end{equation}%

Applying this structural correspondence to the generalized case (\ref{q-log_q-bino-coef}), the last term is explicitly evaluated as:
\begin{equation}
\frac{n^{2-q}}{2-q}S_{2-q}^{\text{Tsallis}}\left( \frac{k}{n}, 1-\frac{k}{n}\right) = \frac{1}{2-q}\left( -k^{2-q}\ln _{2-q}\frac{k}{n} - \left( n-k\right) ^{2-q}\ln _{2-q}\left( 1-\frac{k}{n}\right) \right).
\end{equation}%
Substituting $r=\frac{k}{n}$ into this expression yields the exact formulation of the generalized binomial distribution.

\subsection{Definition of the Generalized Binomial Distribution}

\begin{definition}[$q$-binomial distribution]
\label{def:q-binomial distribution}
For given $n,k \in \mathbb{N}$ (with $k \leq n$) and $r\in \left( 0,1\right)$, the $q$-logarithm of the $q$-binomial distribution $b_{q}\left( k;n,r\right)$ is defined by:
\begin{equation}
\ln _{q}b_{q}\left( k;n,r\right) := \ln _{q}\left( 
\begin{array}{l}
n \\ 
k%
\end{array}%
\right) _{q} + \frac{1}{2-q}\left( k^{2-q}\ln _{2-q}r + \left( n-k\right)^{2-q}\ln _{2-q}\left( 1-r\right) \right) + \ln _{q}C_{q},
\label{def_lnq_q-binomial distribution}
\end{equation}%
where $C_q$ is a normalization constant ensuring $\sum_{k=0}^{n}b_{q}\left( k;n,r\right) = 1$, with $C_{q}>0$ and $C_{1}=1$.
\end{definition}

The $q$-logarithm representation in (\ref{def_lnq_q-binomial distribution}) ensures analytical tractability.
This formulation incorporates the scaling effect via $C_{q}^{1-q}$, which vanishes in the limit $q\to 1$.
Note that this formulation can be extended to define a $q$-multinomial distribution for multiple variables.
The generalized binomial distribution yields two fundamental results:
(i) the emergence of the $\alpha$-divergence as the rate function in the associated Large Deviation Principle, and
(ii) the generalization of the de Moivre-Laplace theorem to power-law distributions.
We demonstrate these results in the subsequent sections.

While numerous generalizations of the binomial distribution have been proposed in the literature, the formulation in Definition \ref{def:q-binomial distribution} is uniquely constrained by the algebraic structure of the $q$-logarithm and the refined $q$-Stirling's formula.
This construction is analytically justified in the subsequent sections: it yields the $\alpha$-divergence as the rate function in the Large Deviation Principle (Section \ref{sec:alpha_divergence_LDP}), and it converges to the $q$-Gaussian distribution in the generalized Central Limit Theorem (Section \ref{sec:q-CLT}).
Therefore, this definition serves as the mathematically consistent formulation required to unify the scale-invariant probability laws with the structures of information geometry.


\section{$\alpha$-Divergence and Large Deviation Estimate Derived from the Generalized Binomial Distribution}
\label{sec:alpha_divergence_LDP}

\subsection{$\alpha$-Divergence Derived from the Generalized Binomial Distribution}

The $q$-divergence is derived from the definition (\ref{def_lnq_q-binomial distribution}) of the $q$-binomial distribution.

\begin{theorem}[$q$-divergence derived from the $q$-binomial distribution]
\label{thm:q-divergence_derived}
For the $q$-binomial distribution $b_{q}\left( k;n,r\right)$ defined by (\ref{def_lnq_q-binomial distribution}), we obtain the following asymptotic expansion for large $n$:
\begin{equation}
\ln _{q}b_{q}\left( k;n,r\right) = -\frac{n^{2-q}}{2-q}D_{2-q}\left( \mathbf{p} \| \mathbf{r} \right) + \ln _{q}C_{q} + O(\ln_q n),
\label{qGBin-qdiv}
\end{equation}%
where $D_{q}\left( \mathbf{p} \| \mathbf{r} \right)$ is the $q$-divergence defined by
\begin{equation}
D_{q}\left( \mathbf{p} \| \mathbf{r} \right) :=\sum_{i=0}^{1} p_{i}\ln _{2-q}\frac{p_{i}}{r_{i}}=\frac{1-\sum\limits_{i=0}^{1}p_{i}^{q}r_{i}^{1-q}}{1-q},
\end{equation}%
with the probability distributions $\mathbf{p} :=\left( \frac{k}{n},1-\frac{k}{n}\right)$ and $\mathbf{r} :=\left( r,1-r\right)$.
\end{theorem}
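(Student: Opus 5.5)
The proof is a direct substitution of Proposition~\ref{prop:q-binomial_entropy} into Definition~\ref{def:q-binomial distribution}, followed by an algebraic recombination. I would regroup the terms of $\ln_q b_q(k;n,r)$ into three parts:
\begin{itemize}
\item the two entropy-like terms, which produce the divergence;
\item the normalization term $\ln_q C_q$, which is carried along unchanged;
\item the lower-order remainder $-c_q+\frac12\left(\ln_q n-\ln_q k-\ln_q(n-k)\right)+O(n^{-q})$ coming from the refined $q$-Stirling formula.
\end{itemize}

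\textbf{Entropy terms.} First I write the Tsallis term in the explicit form already recorded after Proposition~\ref{prop:q-binomial_entropy}. With $p_0=k/n$ and $p_1=1-k/n$ this reads
\[
\frac{n^{2-q}}{2-q}S^{\text{Tsallis}}_{2-q}(\mathbf p)=-\frac{1}{2-q}\sum_i (np_i)^{2-q}\ln_{2-q}p_i .
\]
Adding the probability term $\frac{1}{2-q}\sum_i (np_i)^{2-q}\ln_{2-q}r_i$ from (\ref{def_lnq_q-binomial distribution}) gives
\[
-\frac{n^{2-q}}{2-q}\sum_i p_i^{2-q}\left(\ln_{2-q}p_i-\ln_{2-q}r_i\right).
\]
I then use $\ln_{2-q}x=(x^{q-1}-1)/(q-1)$, which makes each summand equal to $p_i^{2-q}(p_i^{q-1}-r_i^{q-1})/(q-1)=(p_i-p_i^{2-q}r_i^{q-1})/(q-1)$. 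Summing and using $\sum_i p_i=1$ yields $(1-\sum_i p_i^{2-q}r_i^{1-(2-q)})/(1-(2-q))$, which is exactly $D_{2-q}(\mathbf p\|\mathbf r)$ from the stated closed form with $q\mapsto 2-q$. This step is an identity, not an approximation. I would also check it against the first form: $D_{2-q}=\sum_i p_i\ln_q(p_i/r_i)$.

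\textbf{Remainder.} Next I show the remainder is $O(\ln_q n)$. The constant $c_q$ and the $O(n^{-q})$ term are trivially absorbed. For $1\le k\le n-1$ we have $0\le \ln_q k,\ \ln_q(n-k)\le \ln_q n$ because $\ln_q$ is increasing with $\ln_q 1=0$. Hence $\left|\frac12(\ln_q n-\ln_q k-\ln_q(n-k))\right|\le \ln_q n$. For $1<q<2$, $\ln_q n$ is bounded, so the error is $O(1)$, consistent with the statement. For $q=1$ it reduces to the familiar $O(\ln n)$.

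\textbf{Main obstacle.} The delicate point is the boundary cases $k=0$ and $k=n$, together with uniformity in $k$. There the refined Stirling expansion in Proposition~\ref{prop:q-binomial_entropy} is not directly applicable, since $\ln_q(n-k)$ or $\ln_q k$ involves $\ln_q 0$. I would treat these cases directly from (\ref{def:q-multinomial_log}) using $0!_q=1$: the binomial coefficient term vanishes, and the $r$-term already equals $-\frac{n^{2-q}}{2-q}D_{2-q}$ with $p=(1,0)$ or $(0,1)$, adopting the convention $0^{2-q}\ln_{2-q}0=0$ for $q<2$. Otherwise I would state the expansion for $1\le k\le n-1$, where the bound above is uniform in $k$.
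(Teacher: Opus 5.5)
Your proposal is correct and follows essentially the paper's route: write the $q$-binomial coefficient through the Tsallis-entropy term, combine it with the $r$-terms of (\ref{def_lnq_q-binomial distribution}) via $\ln_{2-q}x=(x^{q-1}-1)/(q-1)$ to get $-\frac{n^{2-q}}{2-q}D_{2-q}(\mathbf p\|\mathbf r)$ as an exact identity, and absorb everything else into $O(\ln_q n)$. The differences are minor: you start from the refined expansion of Proposition~\ref{prop:q-binomial_entropy} and bound the half-logarithm terms explicitly, where the paper just invokes (\ref{eq:one-to-one_Tsallis}), and you also treat $k=0,n$ separately; for your uniformity-in-$k$ claim, note that the Stirling remainders for $k!_q$ and $(n-k)!_q$ are $O(k^{-q})+O((n-k)^{-q})=O(1)$ rather than $O(n^{-q})$, which is still absorbed into $O(\ln_q n)$.
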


\begin{proof}
The proof proceeds by direct algebraic evaluation. 
Taking the $q$-logarithm of the $q$-binomial distribution (\ref{def_lnq_q-binomial distribution}), we expand the combinatorial factor asymptotically for large $n$. 
By substituting the empirical distribution $\mathbf{p} = (k/n, 1-k/n)$ and rearranging the terms via the identity (\ref{eq:one-to-one_Tsallis}), the leading terms factorize into the scaled divergence $-\frac{n^{2-q}}{2-q}D_{2-q}\left( \mathbf{p} \| \mathbf{r} \right)$ and the normalization component $\ln_{q}C_{q}$. 
The remaining lower-order fluctuations are absorbed into the residual $O(\ln_q n)$.
\end{proof}

\begin{remark}
The derivation directly extends to the multinomial case. By applying the same procedure to multiple variables, the generalized $q$-divergence $D_{2-q}(p\|r)$ emerges as the corresponding rate function, which recovers the standard Kullback-Leibler divergence in the limit $q\rightarrow 1$.
This generalized divergence is algebraically related to the $\alpha$-divergence \cite{Oha07}.
\end{remark}

\begin{proposition}
The $\alpha$-divergence $D^{\left( \alpha\right) }\left( \mathbf{p} \| \mathbf{r} \right)$ defined by
\begin{equation}
D^{\left( \alpha\right) }\left( \mathbf{p} \| \mathbf{r} \right) :=\left\{ 
\begin{array}{ll}
\frac{4}{1-\alpha^{2}}\left( 1-\sum\limits_{i}p_{i}^{\frac{1-\alpha}{2}}r_{i}^{\frac{1+\alpha}{2}}\right) & \quad\left( \alpha\neq\pm1\right) \\ 
\sum\limits_{i}r_{i}\ln\frac{r_{i}}{p_{i}} & \quad\left( \alpha=1\right) \\ 
\sum\limits_{i}p_{i}\ln\frac{p_{i}}{r_{i}} & \quad\left( \alpha=-1\right)%
\end{array}
\right.
\label{alpha-divergence}
\end{equation}
relates to the $q$-divergence via:
\begin{equation}
D^{\left( \alpha\right) }\left( \mathbf{p} \| \mathbf{r} \right) =\frac{1}{q}D_{q}\left( \mathbf{p} \| \mathbf{r} \right) \quad\left( q\neq0,1\right),
\label{alpha-q-divergence-relation}
\end{equation}
where $q=\frac{1-\alpha}{2}$ ($\alpha\neq\pm1$).
\end{proposition}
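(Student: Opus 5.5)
The plan is a direct algebraic verification: rewrite both sides of (\ref{alpha-q-divergence-relation}) in terms of the single parameter $q=\frac{1-\alpha}{2}$ and compare. No asymptotics are involved. The only earlier material used is the closed form of the $q$-divergence given in Theorem \ref{thm:q-divergence_derived}, namely $D_{q}(\mathbf{p}\|\mathbf{r})=\frac{1-\sum_i p_i^{q}r_i^{1-q}}{1-q}$, together with the case $\alpha\neq\pm1$ of the definition (\ref{alpha-divergence}).

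First I fix the parameter correspondence. From $q=\frac{1-\alpha}{2}$ we get $1-q=\frac{1+\alpha}{2}$. Hence the exponents in (\ref{alpha-divergence}) become
\begin{equation*}
p_i^{\frac{1-\alpha}{2}}r_i^{\frac{1+\alpha}{2}}=p_i^{q}r_i^{1-q}.
\end{equation*}
The condition $\alpha\neq\pm1$ is equivalent to $q\neq 0,1$, which matches the hypothesis $q\neq0,1$ in (\ref{alpha-q-divergence-relation}). So the first branch of (\ref{alpha-divergence}) is the relevant one.

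Next I rewrite the prefactor. Since $1-\alpha=2q$ and $1+\alpha=2(1-q)$, we have $1-\alpha^{2}=(1-\alpha)(1+\alpha)=4q(1-q)$, and therefore $\frac{4}{1-\alpha^{2}}=\frac{1}{q(1-q)}$. This is well defined precisely because $q\neq0,1$. Substituting gives
\begin{equation*}
D^{(\alpha)}(\mathbf{p}\|\mathbf{r})=\frac{1}{q}\cdot\frac{1-\sum_i p_i^{q}r_i^{1-q}}{1-q}=\frac{1}{q}D_{q}(\mathbf{p}\|\mathbf{r}),
\end{equation*}
which is the claim.

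There is no genuine obstacle. The only point needing care is to use the closed form $\frac{1-\sum_i p_i^{q}r_i^{1-q}}{1-q}$ rather than the $\ln_{2-q}$ sum form of $D_q$, and to check that these two forms agree. That check is one line: $p\ln_{2-q}(p/r)=p\,\frac{(p/r)^{q-1}-1}{q-1}$, and summing over $i$ with $\sum_i p_i=1$ gives $\frac{1-\sum_i p_i^{q}r_i^{1-q}}{1-q}$.

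As an optional consistency remark, I would also note the limit $q\to1$, i.e. $\alpha\to-1$. In this limit $\frac{1}{q}D_q$ tends to $\sum_i p_i\ln\frac{p_i}{r_i}$, which agrees with the $\alpha=-1$ branch of (\ref{alpha-divergence}). The case $q\to0$ ($\alpha\to1$) similarly gives $\sum_i r_i\ln\frac{r_i}{p_i}$ by l'Hôpital's rule. These limits are not needed for the proposition as stated.
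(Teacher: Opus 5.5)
Your proof is correct: the substitutions $1-q=\frac{1+\alpha}{2}$ and $1-\alpha^{2}=4q(1-q)$ turn the $\alpha\neq\pm1$ branch directly into $\frac{1}{q}\cdot\frac{1-\sum_i p_i^{q}r_i^{1-q}}{1-q}$, which is the closed form of $D_q$. The paper gives no written proof and treats the proposition as this same direct substitution, so your argument matches the intended one, and your one-line check that the $\ln_{2-q}$ sum form equals the closed form of $D_q$ is also correct.
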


This equivalence implies that the $\alpha$-divergence is obtained from the asymptotic behavior of the $q$-binomial distribution.

\begin{corollary}
By applying the parameter transformation $q = \frac{1-\alpha}{2}$ to the $q$-binomial distribution $b_{q}\left( k;n,r\right)$, we establish the connection to the $\alpha$-geometry as follows:
\begin{equation}
\ln_{\frac{1-\alpha}{2}} b_{\frac{1-\alpha}{2}}\left( k;n,r\right) = -n^{\frac{3+\alpha}{2}} D^{(-2-\alpha)}\left( \mathbf{p} \| \mathbf{r} \right) + \ln_{\frac{1-\alpha}{2}} C_{\frac{1-\alpha}{2}} + O\left(\ln_{\frac{1-\alpha}{2}} n\right),
\end{equation}%
where $D^{(-2-\alpha)}\left( \mathbf{p} \| \mathbf{r} \right)$ represents the corresponding $\alpha$-divergence of index $-2-\alpha$, and $\mathbf{p} = (k/n, 1-k/n)$, $\mathbf{r} = (r, 1-r)$.
\end{corollary}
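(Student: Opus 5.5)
The plan is to obtain the corollary by direct substitution into Theorem \ref{thm:q-divergence_derived}, followed by an application of the relation (\ref{alpha-q-divergence-relation}) with the index $2-q$ in place of $q$. No new asymptotic analysis is needed. The error term $O(\ln_q n)$ and the normalization term $\ln_q C_q$ carry over unchanged once $q=\frac{1-\alpha}{2}$ is inserted.

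First, I fix the parameter range. Theorem \ref{thm:q-divergence_derived} is stated for $0<q<2$, which under $q=\frac{1-\alpha}{2}$ means $-3<\alpha<1$. With this substitution
\begin{equation*}
2-q=\frac{3+\alpha}{2},
\end{equation*}
so the prefactor $n^{2-q}$ in (\ref{qGBin-qdiv}) becomes $n^{\frac{3+\alpha}{2}}$, as claimed.

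Second, I convert $D_{2-q}$ into an $\alpha$-divergence. Write $q' := 2-q$ and look for $\alpha'$ with $q'=\frac{1-\alpha'}{2}$. This gives
\begin{equation*}
\alpha' = 1-2q' = 2q-3 = (1-\alpha)-3 = -2-\alpha .
\end{equation*}
The relation (\ref{alpha-q-divergence-relation}) is a purely algebraic identity between the two closed forms, valid whenever $q'\neq 0,1$. I will check it directly rather than rely on any range restriction. With $q'=\frac{1-\alpha'}{2}$ one has $\frac{1+\alpha'}{2}=1-q'$ and $\frac{4}{1-\alpha'^2}=\frac{1}{q'(1-q')}$. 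Hence
\begin{equation*}
D^{(\alpha')}(\mathbf{p}\|\mathbf{r})=\frac{1}{q'}\cdot\frac{1-\sum_i p_i^{q'}r_i^{1-q'}}{1-q'}=\frac{1}{q'}D_{q'}(\mathbf{p}\|\mathbf{r}).
\end{equation*}
Applying this with $q'=2-q$ gives
\begin{equation*}
\frac{n^{2-q}}{2-q}D_{2-q}(\mathbf{p}\|\mathbf{r}) = n^{\frac{3+\alpha}{2}} D^{(-2-\alpha)}(\mathbf{p}\|\mathbf{r}).
\end{equation*}
Substituting this into (\ref{qGBin-qdiv}) yields the stated formula.

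The only delicate step is the bookkeeping of excluded values, which I expect to be the main obstacle. The identity requires $q'\neq 1$, i.e.\ $q\neq 1$ or $\alpha\neq -1$. At $\alpha=-1$ one has $-2-\alpha=-1$, and both sides are read via the limiting case of (\ref{alpha-divergence}): $D^{(-1)}$ is the Kullback--Leibler divergence, and $D_1$ is interpreted as its $q\to1$ limit. I would handle this by continuity in $\alpha$, with the theorem reducing to the standard case (\ref{stand-log_stand-bino-coef}). The value $q'=0$ corresponds to $q=2$, which lies outside the admissible range, so no further exclusion is needed. Finally, I will note that $\alpha'=-2-\alpha\in(-3,1)$ may lie outside $[-1,1]$, but the closed form remains well defined there for strictly positive $\mathbf{r}$ and, when $q'>1$, nonnegative $\mathbf{p}$ (the exponent $1-q'$ on $r_i$ is then negative, so $r\in(0,1)$ is needed). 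This is exactly the setting of the corollary.
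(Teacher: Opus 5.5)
Your proposal is correct and matches the paper's intended derivation. The paper states the corollary without a separate proof: you substitute $q=\frac{1-\alpha}{2}$ into Theorem \ref{thm:q-divergence_derived} (so $2-q=\frac{3+\alpha}{2}$), then apply (\ref{alpha-q-divergence-relation}) at index $q'=2-q$ (so $\alpha'=-2-\alpha$), which cancels the factor $\frac{1}{2-q}$. Your handling of the excluded case $\alpha=-1$ is a harmless addition; there $D_1$ is already the Kullback--Leibler divergence by the defining sum $\sum_i p_i\ln(p_i/r_i)$, so no limit is needed.
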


Historically, the $\alpha$-divergence \cite{Cher52} and $q$-divergence \cite{Ts98} were introduced independently.
The result above connects these two concepts through the constructive definition of the generalized binomial distribution.

\subsection{Large Deviation Estimate in Power-Law Distributions}

Using (\ref{qGBin-qdiv}), we present the large deviation estimate in terms of the $q$-divergence.
First, we establish the monotonicity of the distribution in the tail region.

\begin{lemma}
\label{lemma:monotonicity}
For $1\leq k\leq nr$, and for sufficiently large $n$, the distribution satisfies the monotonicity condition:
\begin{equation}
\ln _{q}b_{q}\left( k;n,r\right) \geq \ln _{q}b_{q}\left( k-1;n,r\right).
\end{equation}
\end{lemma}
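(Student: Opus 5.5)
The plan is to work with the exact first difference instead of the asymptotic expansion (\ref{qGBin-qdiv}), whose $O(\ln_q n)$ error is far too coarse to settle the sign of a one-step increment. Since $\ln_q n!_q=\sum_{j=1}^n \ln_q j$ exactly, Definition \ref{def:q-binomial distribution} gives the following exact expression, in which the normalization term $\ln_q C_q$ cancels:
\begin{align}
\Delta_k &:= \ln_q b_q(k;n,r)-\ln_q b_q(k-1;n,r) \nonumber\\
&= \ln_q(n-k+1)-\ln_q k +\frac{1}{2-q}\Bigl[\bigl(k^{2-q}-(k-1)^{2-q}\bigr)\ln_{2-q} r-\bigl((n-k+1)^{2-q}-(n-k)^{2-q}\bigr)\ln_{2-q}(1-r)\Bigr].
\end{align}

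Next I would apply the mean value theorem to $x\mapsto x^{2-q}$. This yields $k^{2-q}-(k-1)^{2-q}=(2-q)\xi^{1-q}$ with $\xi\in(k-1,k)$, and similarly $(2-q)\eta^{1-q}$ with $\eta\in(n-k,n-k+1)$. I then use the algebraic identity $x^{1-q}\ln_{2-q}s=\ln_q x-\ln_q(x/s)$, which follows directly from (\ref{lnq}). With it the increment becomes a combination of $q$-logarithms only:
\begin{equation}
\Delta_k=\Bigl[\ln_q\tfrac{\eta}{1-r}-\ln_q\tfrac{\xi}{r}\Bigr]+\bigl[\ln_q(n-k+1)-\ln_q\eta\bigr]-\bigl[\ln_q k-\ln_q\xi\bigr].
\end{equation}
The second bracket is nonnegative because $\eta<n-k+1$. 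In the first bracket, $k\le nr$ gives $\eta/(1-r)>(n-k)/(1-r)\ge n\ge k/r>\xi/r$, so that bracket is positive.

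The remaining task is to show that the negative third bracket is dominated, and I expect this to be the main obstacle. I would compare it with the part of the first bracket coming from $\xi<k$, namely $\ln_q(k/r)-\ln_q(\xi/r)\ge 0$, which I obtain by splitting off $\ln_q(k/r)\le\ln_q n\le\ln_q(\eta/(1-r))$. Writing both differences as integrals of $x^{-q}$ gives
\begin{equation}
\ln_q\tfrac{k}{r}-\ln_q\tfrac{\xi}{r}=r^{q-1}\bigl(\ln_q k-\ln_q\xi\bigr).
\end{equation}
Hence $\Delta_k\ge (r^{q-1}-1)(\ln_q k-\ln_q\xi)\ge 0$ whenever $r^{q-1}\ge 1$, which holds in the regime $0<q<1$ relevant to the LDP.

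For $q>1$, or at the boundary $k=nr$ where the outer gap $\ln_q(\eta/(1-r))-\ln_q n$ can vanish, I would instead locate the mean-value points explicitly. By concavity or convexity of $x^{2-q}$, $\xi$ lies at $k-\tfrac12+O(1/k)$. I would then use the strict slack $\eta-(n-k)\approx\tfrac12$ in $\ln_q(\eta/(1-r))-\ln_q n\gtrsim \tfrac{1}{2(1-r)}n^{-q}$. This slack must beat the error term $O(k^{-q})$, possibly with a separate treatment of small $k$. This is where the hypothesis of sufficiently large $n$ enters, and it is the step I expect to need the most care.
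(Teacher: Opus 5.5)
For $0<q\le 1$ your argument is correct and complete, and it takes a different route from the paper's. The paper replaces $k$ by a continuous variable $x$ and uses $\frac{d}{dx}\ln_q x!_q=\ln_q x+O(x^{-q})$. It then signs only the leading term $\frac{1}{1-q}\bigl[\bigl(\frac{n-x}{1-r}\bigr)^{1-q}-\bigl(\frac{x}{r}\bigr)^{1-q}\bigr]$ and concludes monotonicity ``for sufficiently large $x$''. That leading term is exactly your first bracket with $\xi,\eta$ replaced by $x$ and $n-x$. Your exact first difference makes the remainder explicit as your second and third brackets. The identity $\ln_q(k/r)-\ln_q(\xi/r)=r^{q-1}(\ln_q k-\ln_q\xi)$ then absorbs the only negative piece as soon as $r^{q-1}\ge 1$.

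What this buys is $\Delta_k>0$ for \emph{every} $n$ and every integer $1\le k\le nr$, with no largeness assumption. This covers small $k$, which the LDP upper bound genuinely needs, since it compares every $b_q(k;n,r)$ with $0\le k\le\lfloor nx\rfloor$ against $b_q(\lfloor nx\rfloor;n,r)$. It also covers the endpoint $k\approx nr$. There the paper's leading term vanishes while its $O(x^{-q})$ remainder is of the same order $n^{-q}$, so the paper's argument does not decide the sign. For $q<1$ the boundary discussion in your last paragraph is unnecessary, because the outer gap you discarded is nonnegative anyway.

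Your plan for $1<q<2$, however, cannot be completed, because the monotonicity is false there. At $k=1$ your exact formula reads $\Delta_1=\ln_q n+\frac{1}{2-q}\ln_{2-q}r-\frac{n^{2-q}-(n-1)^{2-q}}{2-q}\ln_{2-q}(1-r)$. For $1<q<2$ we have $\ln_q n\to\frac{1}{q-1}$ and the last term tends to $0$. Hence $\Delta_1\to\frac{r^{q-1}-(q-1)}{(q-1)(2-q)}$, which is negative whenever $r^{q-1}<q-1$. For instance, $q=1.8$ and $r=1/2$ give $\Delta_1\to(2^{-0.8}-0.8)/0.16\approx-1.41$ for all large $n$. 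No separate treatment of small $k$ can rescue this. For $q>1$ the function $\ln_q$ is bounded above by $\frac{1}{q-1}$, so your first bracket stays bounded and need not beat the bounded negative third bracket.

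Near the other end, your slack and the error $(1-r^{q-1})(\ln_q k-\ln_q\xi)$ are both of order $n^{-q}$. Carrying your expansion through at an integer $k=nr$ gives $\Delta_{nr}=n^{-q}\bigl[\frac{1}{2(1-r)}+\frac{(1-r)^{-q}}{2}-\frac{r^{1-q}-1}{2r}\bigr]+O(n^{-q-1})$. This is negative for small $r$; for example, $q=1.5$, $r=0.1$, $n=1000$ gives $\Delta_{100}\approx-3\times10^{-4}$.

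The paper's proof also claims the range $1<q<2$, but only through the leading-order term, which is why it misses this. The statement should be restricted to $0<q\le 1$. That is the only range in which it is used (the LDP theorem), and there your proof is the rigorous version of the paper's argument.
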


\begin{proof}
We analyze the continuous interpolation of the generalized binomial distribution.
Let $x$ be a continuous variable in $[0, n]$.
We define the function $\mathcal{L}_q(x)$ based on the leading terms of (\ref{def_lnq_q-binomial distribution}):
\begin{align}
\mathcal{L}_q(x) &:= \ln_q n!_q - \ln_q x!_q - \ln_q (n-x)!_q \nonumber \\
&\quad + \frac{1}{2-q}\left( x^{2-q}\ln_{2-q}r + (n-x)^{2-q}\ln_{2-q}(1-r) \right) + \ln_q C_q.
\end{align}
Using the derivative approximation $\frac{d}{dx}\ln_q x!_q = \ln_q x + O(x^{-q})$, the derivative is given by:
\begin{align}
\frac{d\mathcal{L}_q(x)}{dx} &= -\ln_q x + \ln_q (n-x) + x^{1-q}\ln_{2-q}r - (n-x)^{1-q}\ln_{2-q}(1-r) + O(x^{-q}) \nonumber \\
&= \frac{1}{1-q} \left[ \left( \frac{n-x}{1-r} \right)^{1-q} - \left( \frac{x}{r} \right)^{1-q} \right] + O(x^{-q}).
\label{eq:deriv_simple}
\end{align}
For $x \leq nr$, the inequality $\frac{n-x}{1-r} \geq \frac{x}{r}$ holds.
Since the function $z \mapsto \frac{1}{1-q} z^{1-q}$ is strictly increasing for all $q \in (0, 2) \setminus \{1\}$, it directly follows that the leading term in (\ref{eq:deriv_simple}) is non-negative.
Consequently, $\frac{d\mathcal{L}_q(x)}{dx} \geq 0$ holds for sufficiently large $x$, proving the monotonicity in the asymptotic regime.
\end{proof}

The monotonicity established in Lemma \ref{lemma:monotonicity} allows us to evaluate the tail probability by the boundary term.
Utilizing this property, we derive the following Large Deviation Principle.

\begin{theorem}[Large Deviation Principle]
Let $X_{i}$ ($i=1,\dots ,n$) be Bernoulli random variables with $P(X_i=1)=r$.
If their sum follows the $q$-binomial distribution $b_{q}\left( k;n,r\right)$, then for $0 < q < 1$, the distribution satisfies the following large deviation property:
\begin{equation}
\lim_{n \to \infty} \frac{1}{n^{2-q}}\ln _{q}P\left( \frac{1}{n}\sum_{i=1}^{n}X_{i}<x\right) = -\frac{1}{2-q}D_{2-q}\left( x\left\Vert r\right. \right) .
\label{main result}
\end{equation}
\end{theorem}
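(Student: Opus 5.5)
We will follow the classical ``upper and lower bound by the boundary term'' argument. It uses Theorem~\ref{thm:q-divergence_derived} for the pointwise asymptotics and Lemma~\ref{lemma:monotonicity} to control the sum. Throughout we fix $x<r$, since for $x\geq r$ the probability tends to a positive constant and the statement is read with the divergence restricted to the tail. We set $k_n:=\lceil nx\rceil-1$, the largest integer with $k_n/n<x$.

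The event is $\{\sum_i X_i\le k_n\}$, so
\begin{equation*}
P\Bigl(\tfrac{1}{n}\textstyle\sum_i X_i<x\Bigr)=\sum_{k=0}^{k_n} b_q(k;n,r).
\end{equation*}
Because $k_n\le nr$, Lemma~\ref{lemma:monotonicity} gives, for large $n$, the sandwich
\begin{equation*}
b_q(k_n;n,r)\;\le\; P\;\le\;(k_n+1)\,b_q(k_n;n,r)\;\le\;(n+1)\,b_q(k_n;n,r).
\end{equation*}
The monotonicity transfers from $\ln_q b_q$ to $b_q$ because $\ln_q$ is increasing. The lemma is only asymptotic in $x$, so small $k$ need a separate remark. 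There $b_q(k;n,r)$ is bounded by the boundary value directly from \eqref{qGBin-qdiv}: $D_{2-q}(k/n\|r)$ is decreasing in $k/n$ on $[0,r]$, with a uniform error. Alternatively, we can truncate the sum at $k\ge \epsilon n$ and bound the remaining piece crudely.

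Next we pass the sandwich through $\ln_q$. For $0<q<1$ we use the $q$-product identity $\ln_q(ab)=\ln_q a+\ln_q b+(1-q)\ln_q a\ln_q b$. With $a=n+1$ this gives $\ln_q\bigl((n+1)b\bigr)=\ln_q b+\ln_q(n+1)\bigl(1+(1-q)\ln_q b\bigr)$, and $1+(1-q)\ln_q b=b^{1-q}\in(0,1]$. The correction is therefore at most $\ln_q(n+1)=O(n^{1-q})$, which is $o(n^{2-q})$. This is exactly where $q<1$ matters: for $q>1$, $\ln_q b$ is bounded below by $-1/(q-1)$, and the scaling $n^{2-q}$ collapses. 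So
\begin{equation*}
\ln_q P=\ln_q b_q(k_n;n,r)+o(n^{2-q}).
\end{equation*}

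We then insert Theorem~\ref{thm:q-divergence_derived} at $k=k_n$:
\begin{equation*}
\ln_q b_q(k_n;n,r)=-\frac{n^{2-q}}{2-q}D_{2-q}\bigl((k_n/n,1-k_n/n)\|(r,1-r)\bigr)+\ln_q C_q+O(\ln_q n).
\end{equation*}
We divide by $n^{2-q}$. The terms $\ln_q C_q$ and $O(\ln_q n)=O(n^{1-q})$ vanish in the limit, provided $C_q$ stays bounded in $n$. Continuity of $D_{2-q}$ in its first argument on $(0,1)$, together with $k_n/n\to x$, gives the limit $-\frac{1}{2-q}D_{2-q}(x\|r)$.

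The main obstacle is uniformity. Theorem~\ref{thm:q-divergence_derived} is stated pointwise with an $O(\ln_q n)$ error, and we need it uniformly along $k_n\sim nx$ with $x\in(0,r)$ fixed. We need the same uniformity for the normalization $\ln_q C_q$, which may depend on $n$. We would first try to prove it by returning to Proposition~\ref{prop:q-binomial_entropy}: its error is $O(n^{-q})$ with the refined $q$-Stirling remainder uniform once $k,n-k\ge \epsilon n$, and the extra term $\frac12(\ln_q n-\ln_q k-\ln_q(n-k))$ is then $O(n^{1-q})$ uniformly. For $C_q$, we would bound it using normalization. The sum of $b_q$ is at least the central term, where $D_{2-q}=0$, so $\ln_q$ of that term is $\ln_q C_q+O(n^{1-q})$. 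This forces $\ln_q C_q\le O(n^{1-q})$. The matching lower bound comes from the $(n+1)$-term upper estimate. Together these give $\ln_q C_q=o(n^{2-q})$, which is all the limit requires.
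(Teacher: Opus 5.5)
Your proposal follows the paper's proof step for step: the boundary-term sandwich via Lemma~\ref{lemma:monotonicity}, the pseudo-additivity bound with $b^{1-q}\le 1$, then Theorem~\ref{thm:q-divergence_derived} at $k\approx nx$. Your extra care about the strict inequality, uniformity and $C_q$ is reasonable. There is, however, a gap that no uniformity estimate can close, and the paper's argument has it too: for $0<q<1$ the claimed limit cannot be reached. Since $\ln_q p=(p^{1-q}-1)/(1-q)$ maps $(0,1]$ into $\bigl(-\frac{1}{1-q},0\bigr]$, whenever $P_n:=P\bigl(\frac1n\sum_i X_i<x\bigr)>0$ you have $\bigl|n^{-(2-q)}\ln_q P_n\bigr|<\frac{1}{(1-q)n^{2-q}}\to 0$, for any distribution whatsoever. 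But $D_{2-q}(x\|r)>0$ for $x\neq r$ by Jensen's inequality. If $P_n=0$, then $\ln_q P_n$ is undefined, or equals $-\frac{1}{1-q}$ by continuity, and the conclusion is the same. Your parenthetical about $q>1$ has this backwards. It is for $q<1$ that $\ln_q$ is bounded below on $(0,1]$, and that boundedness is exactly what destroys the $n^{2-q}$ scaling. For $q>1$, $\ln_q$ is unbounded below on $(0,1]$ and bounded above by $\frac{1}{q-1}$.

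The step that fails is ``insert Theorem~\ref{thm:q-divergence_derived} at $k=k_n$''. Your own normalization argument gives $\ln_q C_q\le O(n^{1-q})$. So the right-hand side of \eqref{qGBin-qdiv} at $k_n$ is at most $-\frac{n^{2-q}}{2-q}D_{2-q}(k_n/n\|r)+O(n^{1-q})$, which tends to $-\infty$ and in particular drops below $-\frac{1}{1-q}$. Since $1+(1-q)\ln_q b=b^{1-q}\ge 0$ for every $b>0$, this number is not the $q$-logarithm of any probability. What Definition~\ref{def:q-binomial distribution} actually prescribes is a formal exponent $\Lambda_n(k)$, the right-hand side of \eqref{def_lnq_q-binomial distribution}. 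The probability $b_q(k;n,r)=\exp_q(\Lambda_n(k))$ exists only where $1+(1-q)\Lambda_n(k)>0$, which is the domain condition of $\exp_q$ and of $\otimes_q$. With the standard cutoff $b_q=0$ elsewhere (the compact-support regime mentioned in Section~\ref{sec:numerical_verification}), the same bounds, with $D_{2-q}(\cdot\|r)$ decreasing on $[0,r]$, give $b_q(k;n,r)=0$ for all $k\le k_n$ once $n$ is large. Hence $P_n=0$, and the left-hand side tends to $0$, not to $-\frac{1}{2-q}D_{2-q}(x\|r)$. In the large-deviation window, Theorem~\ref{thm:q-divergence_derived} is therefore a statement about $\Lambda_n$, not about $\ln_q b_q$. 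The paper's upper and lower bounds make the same identification, so aligning your write-up with the paper does not repair it. The only provable content is $n^{-(2-q)}\Lambda_n(k_n)\to-\frac{1}{2-q}D_{2-q}(x\|r)$, which restates Theorem~\ref{thm:q-divergence_derived} and says nothing about the tail probability.
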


\begin{proof}
Let the cumulative probability be denoted by $P_n(x) := P\left( \frac{1}{n}\sum_{i=1}^{n}X_{i}<x\right) = \sum_{k=0}^{\left\lfloor nx\right\rfloor }b_{q}\left( k;n,r\right)$.

\textit{Upper bound:}
Since $b_q(k; n, r)$ is monotonically increasing for $k \le \lfloor nx \rfloor$ (Lemma \ref{lemma:monotonicity}), the sum is bounded by:
\begin{equation}
P_n(x) \leq \left( \left\lfloor nx\right\rfloor +1\right) b_{q}\left( \left\lfloor nx\right\rfloor ;n,r\right).
\end{equation}
Taking the $q$-logarithm and dividing by $n^{2-q}$, we apply the exact pseudo-additivity $\ln_q(AB) = \ln_q B + B^{1-q}\ln_q A$.
Letting $A = \left\lfloor nx\right\rfloor +1$ and $B = b_{q}(\lfloor nx\rfloor ;n,r)$, we note that $B \le 1$.
For $0 < q < 1$, since $1-q > 0$, we have $B^{1-q} \le 1$, which strictly yields the inequality $\ln_q(AB) \le \ln_q B + \ln_q A$.
Evaluating the residual term $\frac{\ln_q A}{n^{2-q}} = \frac{O(n^{1-q})}{n^{2-q}} = O(n^{-1})$, we obtain:
\begin{equation}
\frac{1}{n^{2-q}}\ln _{q} P_n(x) \le \frac{1}{n^{2-q}}\ln _{q} b_{q}\left( \left\lfloor nx\right\rfloor ;n,r\right) + O(n^{-1}).
\end{equation}
Using Theorem \ref{thm:q-divergence_derived}, the right-hand side converges to $-\frac{1}{2-q}D_{2-q}\left( x\left\Vert r\right. \right)$.
Thus:
\begin{equation}
\limsup_{n \to \infty} \frac{1}{n^{2-q}}\ln _{q}P_n(x) \leq -\frac{1}{2-q}D_{2-q}\left( x\left\Vert r\right. \right) .
\label{q-upper}
\end{equation}

\textit{Lower bound:}
For $x<r$, we have $P_n(x) \geq b_{q}\left( \left\lfloor nx\right\rfloor ;n,r\right)$.
Thus,
\begin{equation}
\liminf_{n \to \infty} \frac{1}{n^{2-q}}\ln _{q}P_n(x) \geq \lim_{n \to \infty} \frac{1}{n^{2-q}}\ln _{q}b_{q}\left( \left\lfloor nx\right\rfloor ;n,r\right) = -\frac{1}{2-q}D_{2-q}\left( x\left\Vert r\right. \right).
\label{q-lower}
\end{equation}

Combining (\ref{q-upper}) and (\ref{q-lower}) completes the proof of (\ref{main result}).
\end{proof}

\begin{remark}
In a preliminary conference report \cite{HS14}, it was implicitly assumed that the pseudo-additivity of the $q$-logarithm preserves the standard LDP scaling for the entire range $0 < q < 2$.
The exact asymptotic analysis, however, shows that for $1 < q < 2$, the scaling factor $A^{1-q}$ in the expansion $\ln_q(AB) = \ln_q A + A^{1-q}\ln_q B$ vanishes since $A^{1-q} = O(n^{1-q})$ and $1-q < 0$.
Consequently, the standard large deviation upper bound fails, indicating that the LDP scaling is valid only for $0 < q < 1$.
This breakdown of the macroscopic large deviation scaling reflects the statistical property of heavy-tailed distributions.
This failure is specific to the large deviation regime and does not affect the local limit behavior in the central regime.
The generalized de Moivre-Laplace theorem is established in the next section to describe the fluctuation dynamics for $0 < q < 2$.
\end{remark}


\section{Generalized Central Limit Theorem via Constructive Approach}
\label{sec:q-CLT}

This section presents a derivation of the $q$-Gaussian distribution from the generalized binomial distribution.
We first utilize a continuous approximation to demonstrate why the scaling law $n^{q/2}$ is required, contrasting with the classical $n^{1/2}$ scaling, and then provide a proof using discrete asymptotic analysis, establishing the theorem within the range $0 < q < 2$.

\subsection{Continuous Approximation Analysis: The Scaling Law}
\label{subsec:heuristic}

We analyze the scaling properties of the generalized binomial distribution using a continuous approximation.
This step clarifies the necessity of the scaling factor $n^{q/2}$ in the $q$-Central Limit Theorem.
Applying the Euler-Maclaurin integral evaluation to the $q$-Stirling's formula, the $q$-logarithm of the probability distribution can be approximated by the integral form:
\begin{align}
\ln_q b_q(\tilde{x}; n, r) &= \int_{1}^{n}\ln_q u \, du - \int_{1}^{\tilde{x}}\ln_q u \, du - \int_{1}^{n-\tilde{x}}\ln_q u \, du \nonumber \\
&\quad + \frac{1}{2-q}\left( \tilde{x}^{2-q}\ln_{2-q}r + (n-\tilde{x})^{2-q}\ln_{2-q}(1-r) \right) + \ln_q C_q + O(\ln_q n).
\label{eq:continuous_approx}
\end{align}
where $\tilde{x} \in [0, n]$ is a continuous variable interpolating the discrete index $k$.
The maximum of the probability distribution, denoted by $\mu$, is determined by the stationarity condition $\frac{d}{d\tilde{x}} \ln_q b_q(\tilde{x}; n, r) = 0$.
Evaluating the derivatives, we find the unique solution:
\begin{equation}
\mu = nr,
\end{equation}
which is consistent with the mean of the standard binomial distribution ($q=1$).
To analyze the fluctuations around the mean, we perform a Taylor expansion of $\ln_q b_q(\tilde{x}; n, r)$ around $\mu = nr$:
\begin{align}
\ln_q b_q(\tilde{x}; n, r) &= \ln_q b_q(nr; n, r) + \left.
\frac{d^2}{d\tilde{x}^2} \ln_q b_q(\tilde{x}; n, r) \right|_{\tilde{x}=nr} \frac{(\tilde{x}-nr)^2}{2} + \dots
\end{align}
A computation of the second derivative yields a result:
\begin{equation}
\left.
\frac{d^2}{d\tilde{x}^2} \ln_q b_q(\tilde{x}; n, r) \right|_{\tilde{x}=nr} = -\frac{1}{n^q r(1-r)}.
\label{eq:second_derivative}
\end{equation}
Equation (\ref{eq:second_derivative}) differs from the standard case $q=1$.
For $q=1$, the denominator is $nr(1-r)$, corresponding to the variance $\sigma^2 \propto n$.
However, for $q \neq 1$, the distribution is governed by the factor $n^q$.
This implies that the standardized variable $x_k$ must be defined using the scaling $n^{q/2}$ to ensure a non-trivial asymptotic distribution.
Motivated by this observation, we define:
\begin{equation}
x_k := \frac{k - nr}{\sqrt{n^q r(1-r)}}.
\end{equation}
We now proceed to the discrete proof based on this scaling.

\subsection{Discrete Proof of the Generalized de Moivre-Laplace Theorem}
\label{subsec:rigorous_proof}

We prove the convergence of the generalized binomial distribution to the $q$-Gaussian distribution.

\begin{theorem}[Generalized de Moivre-Laplace Theorem]
\label{thm:q-deMoivreLaplace}
Let $0 < q < 2$ and $0 < r < 1$.
Consider the generalized binomial distribution $b_q(k; n, r)$ and the standardized variable $x_k$ defined by:
\begin{equation}
x_{k} := \frac{k - nr}{\sqrt{n^{q} r(1-r)}}, \quad (0 \leq k \leq n).
\label{def:standardized_x}
\end{equation}
Let $P_n^* := b_q(\lfloor nr \rfloor; n, r)$ denote the peak probability at the mean.
For any fixed constant $L > 0$, uniformly for all integers $k$ such that $|x_k| \leq L$, the $q$-logarithm of the distribution satisfies the asymptotic expansion:
\begin{equation}
\ln_q b_q(k; n, r) = \ln_q P_n^* - \frac{1}{2} x_k^2 + O\left( n^{-\frac{2-q}{2}} \right).
\end{equation}
Consequently, the probability mass function asymptotically behaves as:
\begin{equation}
b_q(k; n, r) = P_n^* \exp_q \left( - \frac{x_k^2}{2 (P_n^*)^{1-q}} \right) (1 + o(1)).
\end{equation}
\end{theorem}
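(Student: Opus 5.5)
We work directly with the discrete definition (\ref{def_lnq_q-binomial distribution}). The difference $\ln_q b_q(k;n,r)-\ln_q P_n^*$ is the same quantity for every choice of $C_q$, so the normalization constant drops out. The plan is a second-order discrete Taylor expansion of $F(k):=\ln_q b_q(k;n,r)$ around $k^*=\lfloor nr\rfloor$.

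\textbf{Step 1: the first difference.} Using the exact relation $\ln_q n!_q=\sum_{j\le n}\ln_q j$, we have
\begin{equation*}
F(k+1)-F(k) = \ln_q(n-k) - \ln_q(k+1) + \Delta_k ,
\end{equation*}
where $\Delta_k := \frac{1}{2-q}\bigl[((k+1)^{2-q}-k^{2-q})\ln_{2-q}r - ((n-k)^{2-q}-(n-k-1)^{2-q})\ln_{2-q}(1-r)\bigr]$. No Stirling approximation is needed here. By the mean value theorem, $\Delta_k=\xi^{1-q}\ln_{2-q}r-\eta^{1-q}\ln_{2-q}(1-r)$ with $\xi\in[k,k+1]$ and $\eta\in[n-k-1,n-k]$.

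As in the proof of Lemma~\ref{lemma:monotonicity}, combining the terms gives
\begin{equation*}
F(k+1)-F(k) = g(k) + O(n^{-q}), \qquad g(x):=\frac{1}{1-q}\left[\left(\frac{n-x}{1-r}\right)^{1-q}-\left(\frac{x}{r}\right)^{1-q}\right].
\end{equation*}
The $O(n^{-q})$ bound holds uniformly on $|x_k|\le L$, since there $k$ and $n-k$ are both $\asymp n$. Note that $g(nr)=0$ exactly.

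\textbf{Step 2: linearize $g$.} Differentiating,
\begin{equation*}
g'(nr) = -\frac{n^{-q}}{1-r}-\frac{n^{-q}}{r} = -\frac{1}{n^q r(1-r)},
\end{equation*}
which reproduces (\ref{eq:second_derivative}). The second derivative satisfies $g''=O(n^{-1-q})$ on the window. Hence
\begin{equation*}
g(x)=-\frac{x-nr}{n^q r(1-r)}+O\!\left(n^{-1-q}(x-nr)^2\right).
\end{equation*}

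\textbf{Step 3: telescope.} Sum the first differences from $k^*$ to $k$ (or from $k$ to $k^*$ if $k<k^*$). The linear part sums to
\begin{equation*}
-\frac{(k-nr)^2-(k^*-nr)^2}{2n^q r(1-r)} + O\!\left(\frac{|k-nr|}{n^q}\right),
\end{equation*}
where the $O(|k-nr|/n^q)$ term is the usual midpoint correction for a discrete sum of a linear function.

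On the window $|k-nr|\le L\sqrt{r(1-r)}\,n^{q/2}$, so the error terms are bounded as follows:
\begin{itemize}
\item the midpoint correction is $O(n^{-q/2})$;
\item the floor offset $(k^*-nr)^2/n^q$ is $O(n^{-q})$;
\item the Step 1 remainder, summed over $O(n^{q/2})$ terms, is $O(n^{q/2}\cdot n^{-q})=O(n^{-q/2})$;
\item the quadratic term from Step 2, summed, is $O(n^{-1-q}\,n^{3q/2})=O(n^{q/2-1})=O(n^{-(2-q)/2})$.
\end{itemize}
The main term is $-x_k^2/2$. So the claimed error $O(n^{-(2-q)/2})$ is the Taylor error; the $O(n^{-q/2})$ discretization errors must also be tracked, and they are smaller exactly when $q\ge1$.

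\textbf{Main obstacle.} The real difficulty is that bookkeeping of the error exponents. For $q<1$, the midpoint and remainder terms are $O(n^{-q/2})$, which dominates $n^{-(2-q)/2}$. To reach the stated rate I would first try to refine Step 1 by expanding $\ln_q(n-k)-\ln_q(k+1)$ and $\Delta_k$ at half-integer shifts. The aim is to centre the difference at $k+\tfrac12$, so that the $O(n^{-q})$ terms cancel to second order. If this fails, I would state the error as $O(n^{-\min(q,2-q)/2})$, which still tends to $0$ and suffices for the consequence.

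\textbf{Step 4: the consequence.} Write $F(k)=\ln_q P_n^* + \varepsilon$ with $\varepsilon=-\tfrac12x_k^2+o(1)$. The pseudo-additivity of Section~\ref{sec:alpha_divergence_LDP} gives
\begin{equation*}
\exp_q(\ln_q P+\varepsilon)=P\,\exp_q\!\left(\varepsilon/P^{1-q}\right).
\end{equation*}
Continuity of $\exp_q$ then converts the additive $o(1)$ into a factor $(1+o(1))$. This step requires $(P_n^*)^{1-q}$ to stay bounded away from zero relative to the $o(1)$ error, or at least the error to be small compared with $(P_n^*)^{1-q}$. Checking that is the secondary point to verify.
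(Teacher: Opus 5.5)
Your route differs from the paper's. The paper Taylor-expands only the entropy-form part $\frac{n^{2-q}}{(2-q)(1-q)}\bigl[1-(k/n)^{2-q}r^{q-1}-(1-k/n)^{2-q}(1-r)^{q-1}\bigr]$, to second order in $k/n-r$. It drops the term $\frac12\bigl(\ln_q n-\ln_q k-\ln_q(n-k)\bigr)$ of the refined $q$-Stirling formula as if it did not depend on $x_k$. It then identifies the constant by putting $k=nr$ instead of $k=\lfloor nr\rfloor$. Your telescoping of exact first differences keeps every $k$-dependent term, and your error bookkeeping is the accurate one. Together with the Step 4 remark below, it proves the theorem as stated for $1\le q<2$.

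For $0<q<1$, your proposed half-integer refinement cannot close the gap. With $m=k+\frac12$ one has exactly
\[
F(k+1)-F(k)=g(m)+\tfrac12\bigl[(n-m)^{-q}-m^{-q}\bigr]+O\bigl(n^{-1-q}\bigr).
\]
Centering removes the midpoint artifact, but a per-step drift remains. This drift is the derivative of the dropped half-log term (the $q$-analogue of the classical prefactor $\sqrt{n/(k(n-k))}$). On the window it equals $\frac12 n^{-q}\bigl[(1-r)^{-q}-r^{-q}\bigr]+O(n^{-1-q/2})$. Summing gives
\[
\ln_q b_q(k;n,r)-\ln_q P_n^*=-\tfrac12x_k^2+\tfrac12\sqrt{r(1-r)}\,\bigl[(1-r)^{-q}-r^{-q}\bigr]\,x_k\,n^{-q/2}+O\bigl(n^{-q}+n^{-\frac{2-q}{2}}\bigr),
\]
where the $O(n^{-q})$ part contains $\frac12x_{k^*}^2$. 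Hence the stated rate is false for $q<1$:
\begin{itemize}
\item At $k=k^*$ it would require $x_{k^*}^2=O(n^{-(2-q)/2})$. This fails for $q<\frac23$: take $r=\frac12$ and $n$ odd, where $x_{k^*}^2=n^{-q}$.
\item For $r\ne\frac12$, the drift term has exact order $n^{-q/2}\gg n^{-(2-q)/2}$.
\end{itemize}
Your fallback $O(n^{-\min(q,2-q)/2})$ is the correct, sharp rate. At $q=1$ the two orders merge into the classical $n^{-1/2}$.

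Your Step 4 check also splits at $q=1$. Write the first display as $\ln_q b_q(k)=\ln_q P_n^*-\frac12x_k^2+\eta_n$. Then the ratio of the two sides of the second display is $\bigl[1+(1-q)\eta_n/B_n\bigr]^{1/(1-q)}$, with $B_n=(P_n^*)^{1-q}-(1-q)x_k^2/2$.

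For $q>1$ the check succeeds ($q=1$ is classical). Since $P_n^*\le1$, we get $B_n\ge(P_n^*)^{1-q}\ge1$, so the ratio is $1+O(\eta_n)$ uniformly.

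For $q<1$ the check cannot succeed: with fixed $L$, the first display is incompatible with $b_q\ge0$ and $\sum_k b_q=1$. Indeed, $\ln_q b\ge-\frac1{1-q}$ for $b\ge0$. Applying the expansion at some $k$ with $|x_k|\approx L$ therefore forces $(P_n^*)^{1-q}\ge\frac{(1-q)L^2}{2}-o(1)$. Then $b_q(k)^{1-q}\ge\frac{3(1-q)L^2}{8}-o(1)$ for all $\asymp n^{q/2}$ integers with $|x_k|\le L/2$, and the total mass diverges.

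So for $0<q<1$, your Steps 1--3 expand only the formal expression defining $\ln_q b_q$, not a normalized probability mass function. Your remark that an $o(1)$ error suffices for the consequence does not hold in that regime.
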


\begin{proof}
Let $k = nr + \xi_n$ and $n-k = n(1-r) - \xi_n$, where the fluctuation term is given by $\xi_n := x_k \sqrt{n^q r(1-r)}$.
Note that $\xi_n$ scales as $O(n^{q/2})$.

We start from the asymptotic expansion of the $q$-logarithm of the distribution derived in Proposition \ref{prop:q-binomial_entropy} (Eq. (\ref{q-log_q-bino-coef})).
Neglecting terms of order $O(\ln_q n)$ which are subordinate to $O(n^{2-q})$, we have:
\begin{align}
\ln _{q}b_{q}\left( k;n,r\right) &= \frac{n^{2-q}}{2-q}S_{2-q}^{\text{Tsallis}}\left( \frac{k}{n}, 1-\frac{k}{n}\right) \nonumber \\
&\quad + \frac{1}{2-q}\left( k^{2-q}\ln _{2-q}r + \left( n-k\right)^{2-q}\ln _{2-q}\left( 1-r\right) \right) + \ln_q C_q + O(n^{-q}).
\label{eq:proof_expansion_start}
\end{align}
Using the explicit form of Tsallis entropy and the identity $\ln_{2-q} y = \frac{y^{q-1}-1}{q-1}$, we factorize the common prefactor $\frac{n^{2-q}}{(2-q)(1-q)}$ to obtain:
\begin{align}
\ln _{q}b_{q}\left( k;n,r\right) &= \frac{n^{2-q}}{(2-q)(1-q)} \Bigg[ 1 - \left(\frac{k}{n}\right)^{2-q}r^{q-1} - \left(1-\frac{k}{n}\right)^{2-q}(1-r)^{q-1} \Bigg] + \ln_q C_q.
\label{eq:proof_combined}
\end{align}
We analyze the asymptotic behavior of the terms in the bracket using a Taylor expansion around the mean.
The ratio $k/n$ is expanded as $k/n = r + \delta_n$, where $\delta_n := \xi_n/n = n^{\frac{q}{2}-1}\sqrt{r(1-r)}x_k$.
Since $0 < q < 2$, $\delta_n = O(n^{\frac{q}{2}-1}) \to 0$ as $n \to \infty$.
Applying the expansion $(r+\delta)^{2-q} = r^{2-q} + (2-q) r^{1-q}\delta + \frac{(2-q)(1-q)}{2}r^{-q}\delta^2 + O(\delta^3)$, we evaluate the components in the bracket:
\begin{align}
\left(\frac{k}{n}\right)^{2-q} r^{q-1} &= r + (2-q)\delta_n + \frac{(2-q)(1-q)}{2r}\delta_n^2 + O(\delta_n^3), \\
\left(1-\frac{k}{n}\right)^{2-q} (1-r)^{q-1} &= (1-r) - (2-q)\delta_n + \frac{(2-q)(1-q)}{2(1-r)}\delta_n^2 + O(\delta_n^3).
\end{align}
Substituting these expansions back into the bracketed term in (\ref{eq:proof_combined}), we observe that the zeroth-order terms analytically cancel out ($1 - r - (1-r) = 0$), as do the first-order terms.
The leading non-vanishing contribution comes strictly from the second-order terms:
\begin{align}
1 - \left(\frac{k}{n}\right)^{2-q}r^{q-1} - \left(1-\frac{k}{n}\right)^{2-q}(1-r)^{q-1} &= - \frac{(2-q)(1-q)}{2} \delta_n^2 \left( \frac{1}{r} + \frac{1}{1-r} \right) + O(\delta_n^3) \nonumber \\
&= - \frac{(2-q)(1-q)}{2 r(1-r)} \delta_n^2 + O(\delta_n^3).
\end{align}
Multiplying this result by the prefactor $\frac{n^{2-q}}{(2-q)(1-q)}$, and noting that $O(n^{2-q} \delta_n^3) = O(n^{-\frac{2-q}{2}})$, we obtain:
\begin{align}
\ln _{q}b_{q}\left( k;n,r\right) &= - \frac{n^{2-q}}{2 r(1-r)} \delta_n^2 + \mathcal{K}_n(q) + O\left(n^{-\frac{2-q}{2}}\right),
\end{align}
where $\mathcal{K}_n(q)$ aggregates all terms independent of the fluctuation $x_k$, including the normalization $\ln_q C_q$.
Substituting $\delta_n^2 = n^{q-2}r(1-r)x_k^2$, the quadratic term evaluates to $-\frac{1}{2}x_k^2$.
To determine $\mathcal{K}_n(q)$, we evaluate this equation at the mean $k = nr$, which corresponds to $x_k = 0$.
At this point, the left-hand side is $\ln_q b_q(nr; n, r) = \ln_q P_n^*$, and the quadratic term on the right-hand side vanishes.
This yields the identity $\mathcal{K}_n(q) = \ln_q P_n^*$.
Thus, we establish the exact asymptotic form:
\begin{equation}
\ln _{q}b_{q}\left( k;n,r\right) = \ln_q P_n^* - \frac{1}{2} x_k^2 + O\left(n^{-\frac{2-q}{2}}\right).
\label{eq:exact_q_log_clt}
\end{equation}

To recover the probability distribution, we apply the algebraic identity of the $q$-logarithm, $\ln_q y = \ln_q c - X \iff y = c \exp_q\left(- X / c^{1-q}\right)$.
Setting $y = b_q$, $c = P_n^*$, and $X = \frac{1}{2}x_k^2$, we arrive at:
\begin{equation}
b_q(k; n, r) = P_n^* \exp_q \left( - \frac{x_k^2}{2 (P_n^*)^{1-q}} \right) (1 + o(1)).
\label{eq:final_q_gaussian}
\end{equation}

Finally, to relate the discrete probability mass function $b_q(k; n, r)$ to the continuous probability density function $\mathcal{G}_q(x)$, we account for the grid spacing $\Delta x_n = 1/\sqrt{n^q r(1-r)}$.
Equation (\ref{eq:final_q_gaussian}) establishes that the step-function density $b_q(k; n, r) / \Delta x_n$ converges pointwise to the $q$-Gaussian kernel.
By Scheff\'{e}'s theorem \cite{Billingsley1995}, since the discrete probabilities sum to $1$, the pointwise convergence of these normalized densities guarantees convergence in distribution:
\begin{equation}
    \lim_{n \to \infty} \frac{b_q(k; n, r)}{\Delta x_n} = \lim_{n \to \infty} \sqrt{n^q r(1-r)} \, b_q(k; n, r) = \mathcal{G}_q(x),
\end{equation}
where $\mathcal{G}_q(x)$ is the continuous $q$-Gaussian density.
This completes the proof.
\end{proof}

\begin{remark}[Contrast with the Large Deviation Regime]
As observed in Section IV, the macroscopic scaling of the large deviation principle fails for the regime $1 < q < 2$ due to the heavy-tailed nature of the probability distribution.
The present theorem demonstrates that, in contrast to the global LDP, the local limit behavior remains universally valid across the entire range $0 < q < 2$ under the appropriate fluctuation scaling $n^{q/2}$.
This confirms that the $q$-Gaussian distribution is the local attractor for these generalized statistics, even in the regime where the macroscopic exponential bounds break down.
\end{remark}



\section{Numerical Verification}
\label{sec:numerical_verification}

To validate the asymptotic results derived in Theorem \ref{thm:q-deMoivreLaplace}, we performed numerical evaluations of the generalized binomial distribution for large system sizes ($n=50,000$ and $n=500,000$).
Unlike Monte Carlo simulations that rely on random sampling, we computed the probability mass functions by utilizing the algebraic properties of the $q$-logarithm, ensuring precision without numerical overflow.
In accordance with our constructive approach, we applied an algebraic shift based on the maximum weight followed by standard linear normalization ($\sum b_q = 1$), avoiding the use of escort distributions.
The horizontal axis was standardized using the scaling factor derived in our theoretical analysis:
\begin{equation}
    x = \frac{k - \mu}{\sigma_q}, \quad \text{where} \quad \sigma_q = n^{q/2} \sqrt{r(1-r)}.
\label{eq:scaling_verification}
\end{equation}
This macroscopic scaling $n^{q/2}$ is mathematically required to ensure convergence to a non-trivial asymptotic distribution and to compensate for the extreme variance expansion inherent in the non-additive regime.

\subsection{Standard Convergence Regimes ($q < 5/3$)}

We first verify the convergence in the regimes where the limit distribution possesses a finite variance (for $1 \le q < 5/3$) or compact support (for $0 < q < 1$).
We selected two representative values: $q=1.5$ (heavy-tailed regime) and $q=0.5$ (compact support regime) with $r=0.5$.
Fig. \ref{fig:q-CLT_main_linear} and Fig. \ref{fig:q-CLT_main_log} present the comparison between the discrete distributions (shaded areas) and the theoretical limits $\mathcal{G}_q(x)$ (dashed curves) in linear and logarithmic scales, respectively.

\begin{figure}[!htbp]
    \centering
    \includegraphics[width=\columnwidth]{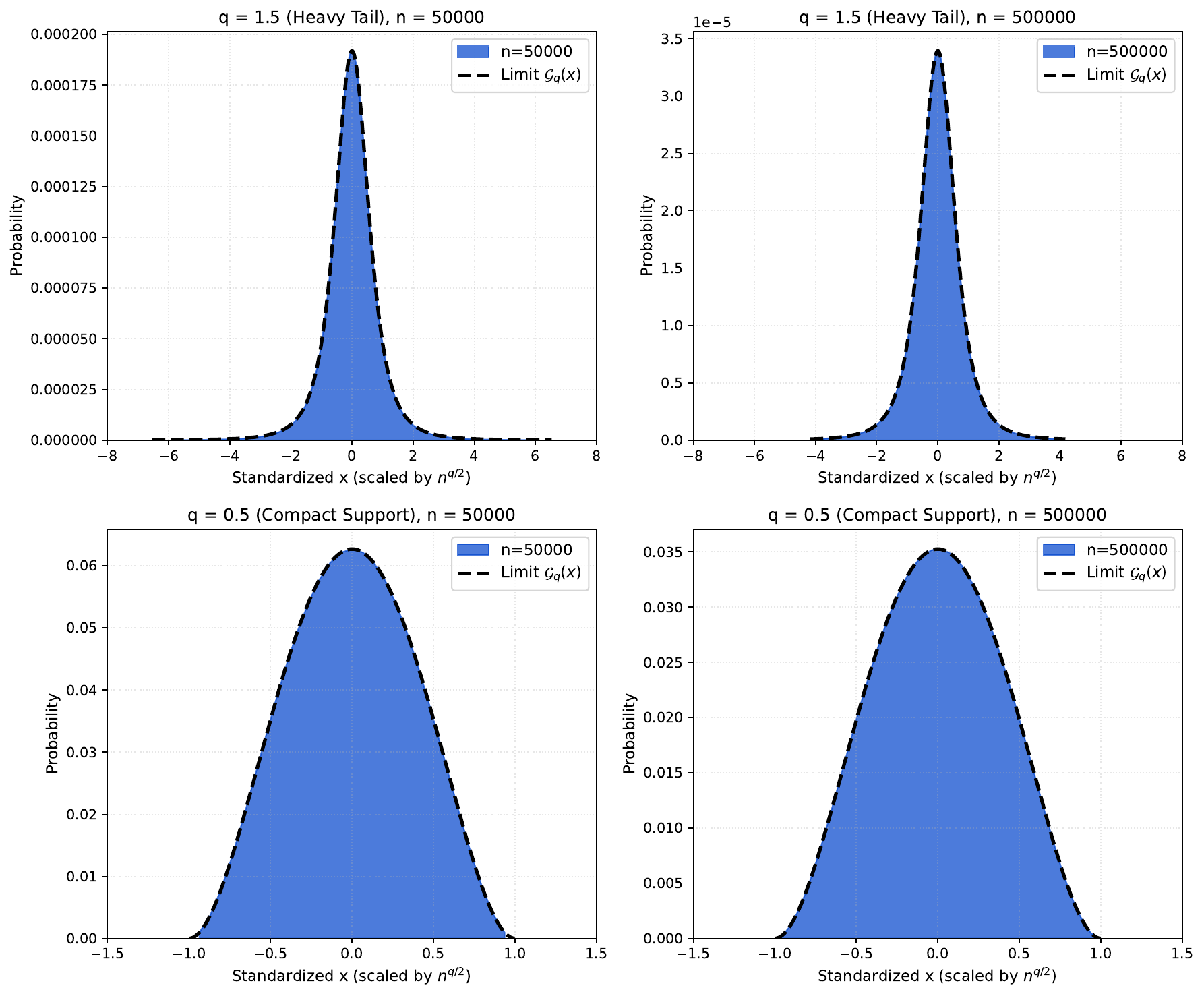}
    \caption{Numerical verification of the generalized de Moivre-Laplace theorem (linear scale) for $n=50,000$ and $n=500,000$ with $r=0.5$.
    The empirical probability mass functions of the $q$-binomial distribution (shaded areas) are plotted against the standardized variable $x = (k-nr)/n^{q/2}$.
    The dashed black lines represent the asymptotic continuous $q$-Gaussian limits $\mathcal{G}_q(x)$. 
    Top row: $q=1.5$ (heavy-tailed regime). Bottom row: $q=0.5$ (compact support regime).
    The linear scale shows the geometric convergence of the probability core around the mean.}
    \label{fig:q-CLT_main_linear}
\end{figure}

\begin{figure}[!htbp]
    \centering
    \includegraphics[width=\columnwidth]{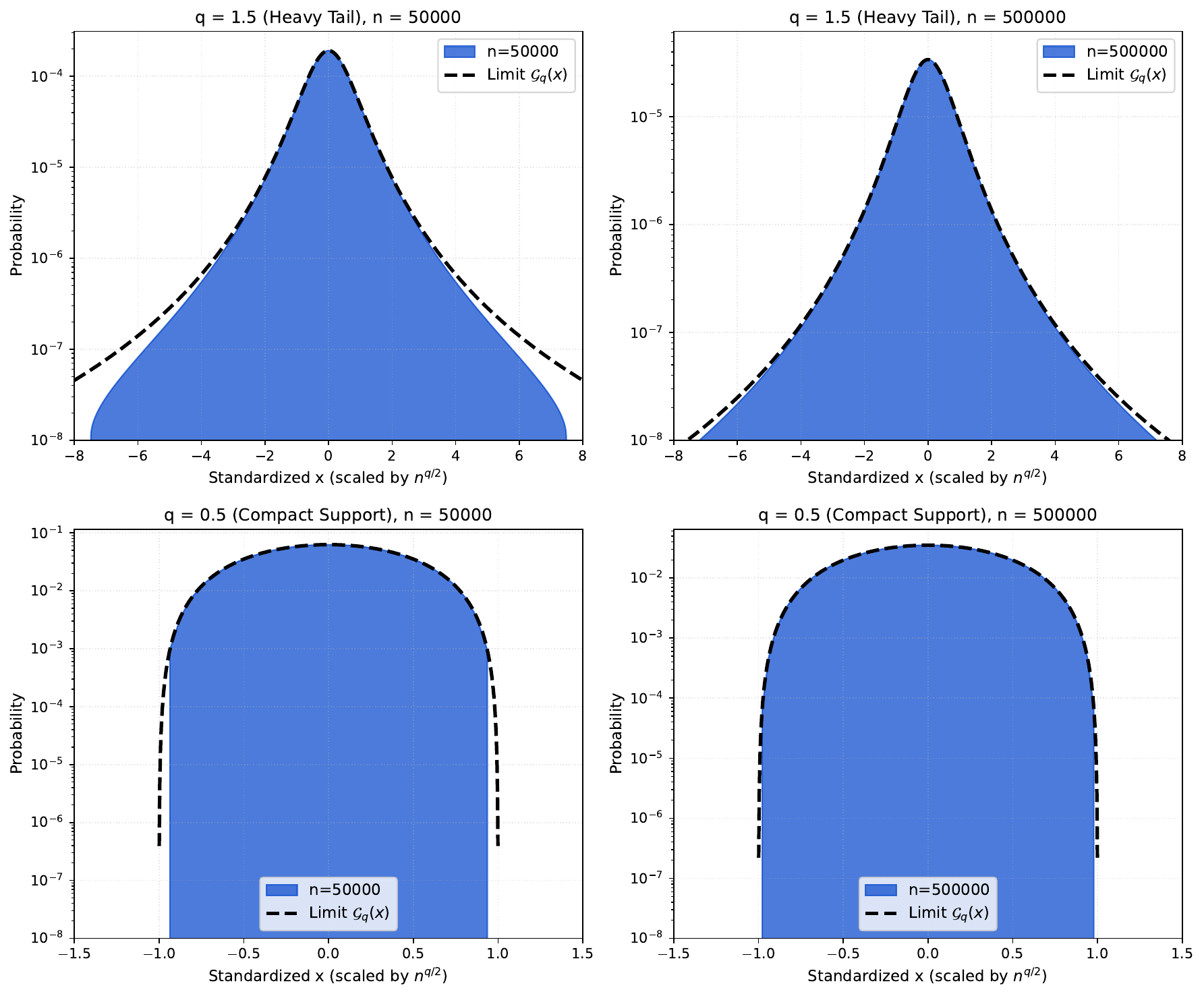}
    \caption{Logarithmic scale representation of the distributions shown in Fig. \ref{fig:q-CLT_main_linear}.
    Top row ($q=1.5$): The logarithmic axis visualizes the power-law tail behavior, showing that the tails of the discrete distribution converge to the $q$-Gaussian profile.
    Bottom row ($q=0.5$): The logarithmic scale demonstrates the adherence to the finite boundary cutoff inherent in the compact support regime.}
    \label{fig:q-CLT_main_log}
\end{figure}

The logarithmic representation (Fig. \ref{fig:q-CLT_main_log}) shows that as $n$ increases, the tail of the empirical distribution converges to the $q$-Gaussian profile. This alignment demonstrates that the macroscopic $q$-Gaussian distribution is generated from the combinatorial trials.

\subsection{Divergent Variance Regime ($q \ge 5/3$)}

To examine the scaling $n^{q/2}$, we consider the regime where $q \ge 5/3$.
In this domain, the second moment of the theoretical $q$-Gaussian distribution diverges, rendering the classical Central Limit Theorem inapplicable.
Fig. \ref{fig:q-CLT_divergent_linear} and Fig. \ref{fig:q-CLT_divergent_log} display the results for $q=1.8$.

\begin{figure}[!htbp]
    \centering
    \includegraphics[width=\columnwidth]{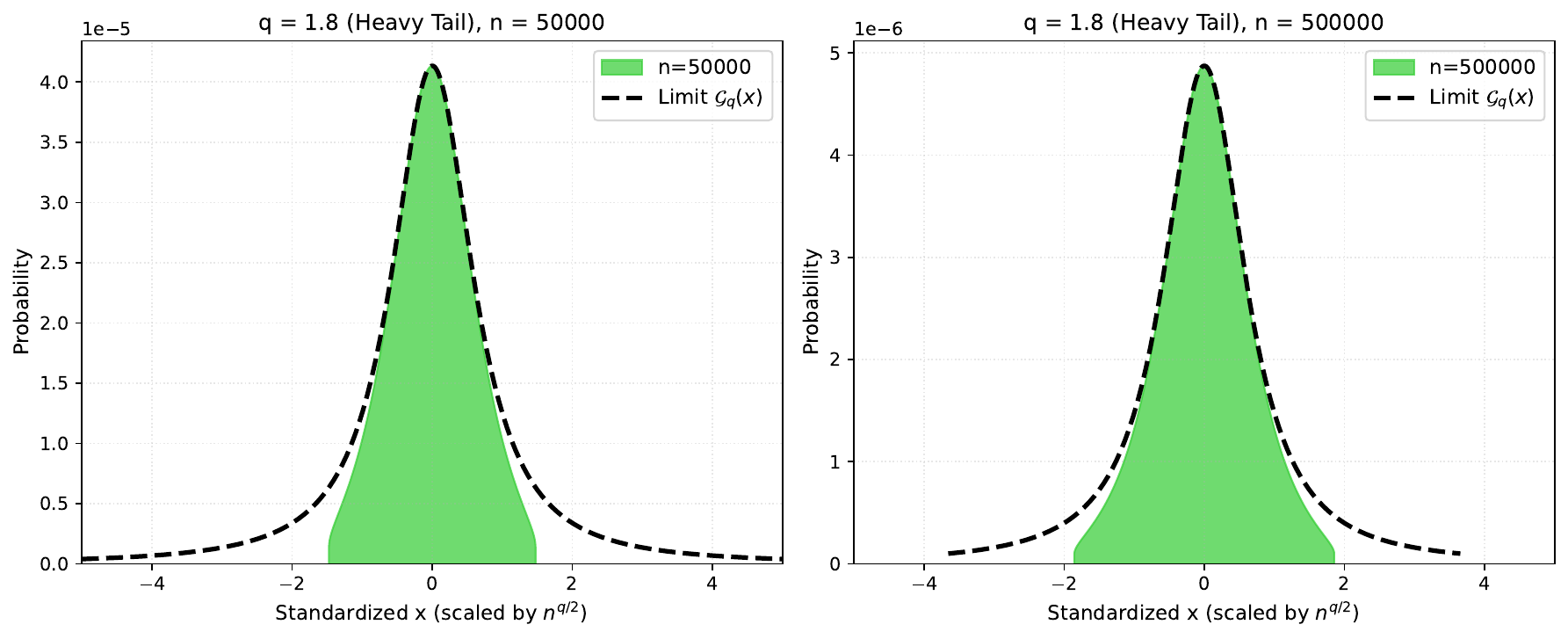}
    \caption{Numerical verification for the divergent variance regime ($q=1.8$) in linear scale.
    Despite the infinite theoretical variance, the generalized scaling factor $n^{q/2}$ absorbs the anomalous fluctuations, preserving the geometrical shape of the distribution within the finite observation window.}
    \label{fig:q-CLT_divergent_linear}
\end{figure}

\begin{figure}[!htbp]
    \centering
    \includegraphics[width=\columnwidth]{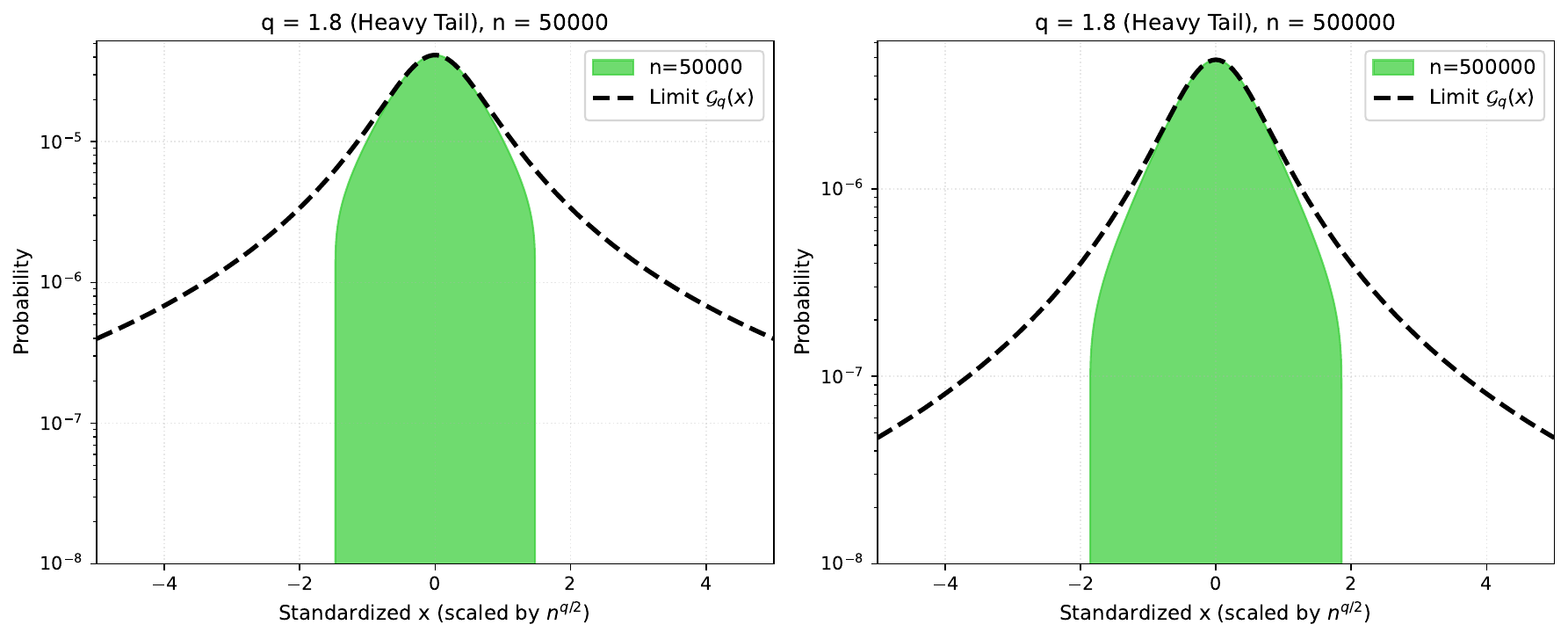}
    \caption{Logarithmic scale representation for $q=1.8$.
    The plot shows the heavy-tail characteristics of the generalized binomial process. The continuous $q$-Gaussian limit captures the asymptotic behavior of these far-tail events.}
    \label{fig:q-CLT_divergent_log}
\end{figure}

The numerical results for $q=1.8$ confirm that the $q$-Gaussian distribution remains the local attractor. The scaling law $n^{1.8/2}$ dynamically compensates for the extreme variance expansion, maintaining the structural integrity of the generalized binomial distribution.
This validates our formulation across the entire physical range of the deformation parameter $0 < q < 2$.


\section{Conclusion}
\label{sec:conclusion}

In this paper, we have established a mathematical framework for power-law distributions, deriving the generalized de Moivre-Laplace theorem from the algebraic structure of the nonlinear differential equation $dy/dx=y^q$.
Unlike approaches that rely on variational principles or functional assumptions, our framework proceeds constructively from the $q$-Stirling's formula to the $q$-binomial distribution, and finally to the generalized limit theorems.
This work introduces a generative paradigm for modeling power-law phenomena.
While $q$-Gaussian distributions have been widely applied to empirical data, their use has often lacked a fundamental generative mechanism.
The generalized binomial distribution established here provides a microscopic model.
Our framework connects microscopic probabilistic trials directly to macroscopic heavy-tailed fluctuations, enabling the constructive modeling of stochastic processes grounded in exact convergence theorems.
Our asymptotic analysis yielded two primary results that link algebra, probability, and information geometry.
First, via the Large Deviation Principle, we proved that for $0 < q < 1$, the $\alpha$-divergence is identified as the exact rate function governing the generalized binomial process, while demonstrating the fundamental breakdown of this macroscopic scaling for heavier tails ($q > 1$).
This confirms that the algebraic structure of the $q$-product is compatible with the geometry of information.
Second, via the generalized de Moivre-Laplace theorem, we rigorously derived the $q$-Gaussian distribution as the universal local attractor for the entire range $0 < q < 2$.
We identified the fluctuation scaling law $n^{q/2}$ as a strict mathematical requirement to ensure a non-trivial asymptotic distribution.
This generalized scaling $n^{q/2}$ provides a physical interpretation of the deformation parameter $q$ as the scaling exponent governing anomalous fluctuations.

Beyond the probabilistic limit theorems established in this paper, the proposed constructive framework provides a mathematical foundation for broader theoretical applications. 
While the present work focuses on the generalized de Moivre-Laplace theorem and the Large Deviation Principle, the algebraic structure of the $q$-logarithm naturally extends to higher-order fluctuation analyses. 
Specifically, in the vicinity of the classical limit $q \to 1$, the parameter $q$ systematically governs the variance of information density (varentropy), which plays a critical role in finite-size scaling and non-equilibrium fluctuations.

The detailed operational interpretation of this varentropy-driven scaling, its geometric properties on statistical manifolds, and its direct applications to finite blocklength information theory are beyond the scope of this fundamental probabilistic framework. 
These information-theoretic and geometric extensions are thoroughly addressed in the subsequent papers of this series \cite{Hexalogy_PartII, Hexalogy_PartVI}. 
Ultimately, this constructive approach aims to unify the distinct mathematical languages of probability, statistical mechanics, and information geometry into a single, cohesive framework.

\begin{acknowledgments}
The first author (H.S.) expresses his deepest gratitude to Prof. Emeritus Jan Naudts (University of Antwerp) for his invaluable discussions, guidance, and warm hospitality during a visiting research stay from April to September 2013. The profound insights gained during this period laid the foundational groundwork for this long-term research program.
H.S. also gratefully acknowledges the hospitality and support of Politecnico di Torino, Italy, during his subsequent sabbatical stay from October 2013 to March 2014, where the core concepts of this specific work were initially conceived.
The authors are also grateful to Prof. Ugur Tirnakli for his constructive suggestions on the numerical verification and the presentation of the figures.
\end{acknowledgments}

\bibliography{aipsamp}

\end{document}